\documentclass[11pt]{article}

\usepackage{amsmath}
\usepackage{amsthm,amscd,amsfonts}
\usepackage{amssymb, upref, color}
\usepackage{amsmath,amssymb,amsthm,mathrsfs}
\usepackage{color}
\usepackage[dvips]{graphicx}
\usepackage{epsf,epsfig, verbatim} 
\usepackage{latexsym,bm}
\usepackage{indentfirst}
\usepackage{color}
\usepackage{xcolor}
\usepackage{cite}

\usepackage{epsfig}
\usepackage{epstopdf}
\usepackage{subfigure}
\usepackage[font=scriptsize]{caption}
\usepackage{graphicx}
\graphicspath{{figures/}}

\numberwithin{equation}{section}

\theoremstyle{plain}
\newtheorem{exam}{Example}[section]
\newtheorem{theorem}[exam]{Theorem}
\newtheorem{lemma}[exam]{Lemma}
\newtheorem{remark}[exam]{Remark}

\def\bc{\begin{center}}
 \def\ec{\end{center}}
\def\be{\begin{equation}}
 \def\ee{\end{equation}}
\def\ba{\begin{array}}
 \def\ea{\end{array}}
\def\bea{\begin{eqnarray}}
 \def\eea{\end{eqnarray}}
\def\beaa{\begin{eqnarray*}}
 \def\eeaa{\end{eqnarray*}}

\def\la{\lambda}

\def\e{\varepsilon}

\def\Lp{{\mathcal L}^p}

\def\lb{\label}

\begin{document}
 \sloppy
 \captionsetup[figure]{labelfont={bf},name={Fig.},labelsep=period}
 \captionsetup[table]{labelfont={bf},name={Table},labelsep=period}

 \title{\bf A dynamical system framework yielding quantitative inverse spectral results for Sturm–Liouville operators
\footnote{This paper was supported from Natural Science Foundation of Guangdong Province (2026A1515011073),
 		National Natural Science Foundation of China (No.12571165) and NSF-Zhejiang Province (No.LZ24A010006).}
 }

 \author
 {
  Yuchao He$^{a}$,\qquad 
  Yonghui Xia$^{b}$\footnote{Corresponding author.  yhxia@zjnu.cn;xiadoc@163.com}, \qquad Meirong Zhang$^{c}$\\
  {\small \textit{$^a$ School of Mathematical  Science,  Zhejiang Normal University, Jinhua 321004, China}}\\
  {\small \textit{$^b$ School of Mathematics, Foshan University, Foshan 528000, China }}\\
  {\small \textit{$^c$Department of Mathematical Sciences, Tsinghua University, Beijing 100084, China}}\\
  {\small Email:  yuchaohe@zjnu.edu.cn;  xiadoc@163.com; zhangmr@tsinghua.edu.cn}
 }

 \maketitle
 \begin{abstract}
         This paper establishes a dynamical-system framework that yields quantitative results for the inverse optimal spectral problem of reconstructing a potential $\hat{q}$ from finite observed eigenvalues to achieve an optimal approximation of the target potential $q_0$.
          Previous efforts relying on convex analysis have been confined solely to {\em qualitative} analysis due to the inherent limitations of convex-analytic techniques for inverse problems, while the {\bf quantitative} counterpart has remained an open problem. Based on our dynamical-system framework, we provide a quantitative characterization of the relationship between the reconstructed potential $\hat{q}$, its target potential  $q_0$, and the observed eigenvalue $\lambda_*$.
          In particular, for ${q} \in \mathcal{L}^2$, our framework yields a substantially stronger conclusion.
     Remarkably, our dynamical-system framework secures the uniqueness of  $\hat{q}$ over the full parameter space $(\lambda_*, q_0)$, liberating the theory from the prevailing constraint  $\lambda_* > \lambda_1(q_0)$ (where $\lambda_*$ is the observed eigenvalue and $\lambda_1$ is the principle eigenvalue). This stands in sharp contrast to classical approaches, which rely heavily on convex-set analysis and are inherently confined by its stringent assumptions.    
          An additional finding is the construction of a homeomorphic mapping that reveals the dilation relation between the errors $\|\hat{q} - q_0\|_{\mathcal L^p}$ associated with the $m$-th eigenvalue and the principal eigenvalue.    
     A summary of the main results, along with practical applications in structural health monitoring and damage detection, material design, seismic wave analysis, sonar detection, and related fields, concludes this work.
     \\
 {\footnotesize{\bf Keywords}: Inverse spectral problem; Planar dynamical system approach; Potential reconstruction; Sturm-Liouville equation\\
{\bf MSC}: 34A55; 34B24; 35R30}
\end{abstract}
 \section{Introduction and motivation}

  Since the pioneering work of Sturm \cite{Sturm} and Liouville \cite{Liouville}, numerous mathematicians have made significant contributions to the fundamental theory of the Sturm-Liouville equation. Notable examples include the contributions of Arnold \cite{arnold-book, arnold}, Courant and Hilbert \cite{C-H}, Bérard and Helffer \cite{P-B}, Bérard and Webb \cite{P-D}, Krein \cite{krein1}, Ashbaugh and Benguria \cite{M-R,M-R2}, M\"oller and Zettl \cite{M-Z}, and Zettl \cite{zettl}. 
 Note that the standard form of the Sturm-Liouville equation with Dirichlet boundary conditions is given as: 
\begin{equation}\label{s-l}
	\begin{cases}
		-u''+q(x)u=\la u,\\
		u(0)=u(1)=0,
	\end{cases}
\end{equation}
where the potential {$q\in \mathcal L^p:=\mathcal L^p([0,1],\mathbb R)$, $p\in[1,+\infty]$.}
According to the classical spectral theory (e.g. see \cite{book1, book2}), \eqref{s-l} admits an increasing sequence of eigenvalues
\[
\lambda_1(q)<\lambda_2(q)\cdots<\lambda_m(q)<\cdots.
\]
 The origins of the classical direct spectral problem can be traced back to the work of Krein \cite{krein1} in the 1950s. 
  The task of the direct problem for Sturm-Liouville operator is: for a prescribed potential $q(x)$ and specified boundary conditions, one seeks to find the eigenvalues $\lambda_m$, the associated eigenfunctions $E_m(x)$, and and an understanding of their analytical properties. By index theory, Hu \cite{Hu1} and Hu et al \cite{Hu2,Hu3} studied the eigenvalues of Sturm-Liouville problem.
 We recall that Zhang \cite{zhang1} proved that the eigenvalues exhibit complete continuity with respect to varys in the potential, which established a foundation for extensive follow-up research concerning direct problems in Sturm-Liouville equations. 
 Subsequently, Wei, Meng and Zhang \cite{zhang2}  determined the maximum and minimum eigenvalues via a given potential function $q\in \mathcal L^p$, $\ p\in[1,+\infty]$. {It is noteworthy that the methodology developed by Zhang \cite{zhang1} for the Sturm-Liouville operators extends its significance to the study of other classes of operator problems (see Chu, Meng and Zhang \cite{c-m-z, cmz_ADV} and Zhang \cite{Zhang3}).} Very recently, based on the results of Zhang \cite{zhang1}, Chu, Meng and Zhang \cite{chu2} developed an analytical approach to derive sharp estimates for the lowest positive periodic eigenvalue and all Dirichlet eigenvalues of a general Sturm-Liouville problem.
 
 In contrast to the direct spectral problem, the inverse spectral problem involves recovering the unknown potential function $q(x)$ or the boundary conditions from given spectral data. This field was initiated by the seminal work of Borg \cite{Borg}, who proved that a single spectrum is generally insufficient to uniquely determine the potential. A more complete characterization was later established by Levitan and Gasymov \cite{LG} and Gelfand and Levitan \cite{GL}, who developed the classical reconstruction method using spectral measures. The field's scope was further broadened by Kac \cite{kac}, whose famous inquiry, ``Can one hear the shape of a drum?'', stimulated extensive research into the relationship between eigenvalues and domain geometry. Building on these foundations, P\"oschel and Trubowitz \cite{poschel} advanced the theory by establishing the real analytic isomorphism of the spectral mapping and describing isospectral sets as infinite-dimensional tori. 
  {However, traditional inverse problem methods \cite{hald, AMIROV,E-K} are difficult to apply directly; while the traditional inverse spectral problem typically requires the complete spectrum, the associated inverse nodal problem \cite{P-T,YANGCF,he,yang,yang2,w-y,shen,C-J,C-T, P-B-R,tams} depends on a dense subset of nodes (zeros of the eigenfunctions). In many real-world settings-including structural health monitoring, damage detection, material design, seismic wave analysis, and sonar detection—the observed data (e.g., frequencies, nodal information) are often sparse and incomplete.}
  
   Can one detect the structural damage in bridge stay cables or architectural beams? Can one correct a string's off-pitch tone by reshaping? Can one detect seismic waves by listening and using prior geological knowledge?  Can one design an advanced composite material by precise resonant frequencies and an initial density distribution? To address these practical questions, this paper develops a quantitative theoretical framework that reveals the profound mathematical logic inherent in them.  We intend to delve directly into the core mathematical principles, reserving the detailed answers to the aforementioned questions for the final section, where they will be presented as practical applications of our mathematical theory. 
  In essence, all the questions above can be unified as an {inverse optimization spectral problem} for Sturm-Liouville operator: reconstruct a potential (physical state) $\hat{q}$ from limited spectral data (the observed resonant frequencies), where $\hat{q}$ optimally approximates the target potential $q_0$. In rigorous mathematical terms, the inverse optimization spectral problem (IOSP) described above can be formulated as follows:
  \\
  {\bf Problem ${ \rm (IOSP)}$}: Let $\lambda_* \in \mathbb R$ be the given eigenvalue and $q_0 \in \mathcal L^p$ be the prior potential. The problem seeks a potential $\hat q \in \mathcal L^p$ that minimizes the deviation from $q_0$ while satisfying the constraint $\lambda_m(\hat{q}) = \lambda_*$:
  \begin{equation}\label{youhua}
  	\|\hat q-q_0\|_{\mathcal L^p}=\min\{\|q-q_0\|_{\mathcal L^p}:\lambda_*=\lambda_m(q),\ q\in \mathcal L^p\}.
  \end{equation}
       Compared with traditional inverse spectral problems, Problem (IOSP) exhibits superior applicability in real-world scenarios due to its minimal data requirements, necessitating only a single observation.  
 Note that a qualitative result was established in \cite{V-I} for the existence and uniqueness of the reconstructed potential $\hat q$ by the restriction 
 $\lambda_*>\lambda_1(q_0)$. 

 However,  the reliance on convex analysis theory in \cite{V-I} to discuss the uniqueness of the inverse spectral problem restricts their result to the traditional constraint $\lambda_* > \lambda_1(q_0)$.   
 Furthermore, in practical applications, proving the {\em qualitative} result for existence and uniqueness of the solution to Problem (IOSP) is necessary but insufficient. The essential goal is a {\bf quantitative} characterization of the reconstructed potential $\hat{q}$, for which an explicit exact expression represents the optimal outcome.
    We introduce a planar dynamical system approach that fundamentally departs from all previous methodologies in inverse spectral problems. This approach simultaneously eliminates the restrictions on the parameter space $(\lambda_*, q_0)$ and yields an explicit expression for $\hat{q}$.
    On one hand, in contrast to the uniform dynamics observed under the restriction $\lambda_* > \lambda_1(q_0)$,
  removing this constraint exhibits three distinct types of dynamical behavior, 
  significantly increasing analytical difficulty, as evidenced by the phase diagrams {(see Fig. 1, Fig. 2., Fig. 3 in Section 3).  
 	On the other hand, we provide specific expressions for the minimized error  $\|\hat{q} - q_0\|_{\mathcal L^p}$, which is primarily derived from the periodicity of the solution to the critical equation. Moreover, in the specific case where ${q} \in \mathcal{L}^2$, we demonstrate that a sharper result can be achieved: an explicit analytical expression for $\hat{q}$ in terms of $q_0$ and $\lambda_*$. 
 	 	Furthermore, we present the dilation relation between the errors  associated with the $m$-th eigenvalue and that of the principal eigenvalue through  a constructed homeomorphic mapping. This homeomorphism shows our dynamical-system framework accommodates any \(\lambda_m(q)\) as \(\lambda_*\) in Problem (ISOP), rather than only \(\lambda_1(q)\).

In next section, we prove the existence of solutions to Problem (IOSP).
In Section 3, we establish a planar dynamical system framework to fully classify the critical equation. Specifically, we show that its solutions are periodic and determine the sign parameter
$\e$ explicitly in terms of $q_0$, $\lambda_*$ and $m$ distinguishing three distinct dynamical regimes (see Theorem \ref{ep_sign} and \ref{ep_sign2}). Based on this classification, we derive the explicit quantitative expression for the error $\|\hat{q} - q_0\|_{\mathcal{L}^p}$ (See Theorem \ref{Th6}), which  characterizes the relationship between $q_0$, $\lambda_*$, $m$ and the potential $\hat{q}$. We further sharpen the results for the special case $q\in\mathcal{L}^2$, obtaining an explicit analytical expression for $\hat{q}$ in terms of Jacobi elliptic functions (See Theorem \ref{3.10}). We then prove the uniqueness of $\hat q$ over the full parameter space $( \lambda_*, q_0)$ without the traditional constraint $\lambda_*>\lambda_1(q_0)$ (See Theorem \ref{3.12}). In Section 4, we introduce a homeomorphic mapping $\mathcal{R}_m$ and establish a dilation relation between the errors associated with the $m$-th eigenvalue and the principal eigenvalue. Finally, in Section 5, we summarize the contributions  and present practical applications to answer the questions arsing from the practical applications.

\section {Preliminary results: existence and critical equation}
\subsection{Existence of the solution to Problem (IOSP)}
We first demonstrate that for this inverse spectral problem, its solution exists, primarily relying on the complete continuity of eigenvalues with respect to the potential. 
 \begin{theorem}
 For $q_0 \in \mathcal  L^p$ $(p>1)$ , there exists a potential $\hat q \in \mathcal  L^p$ which
 solves Problem ${\rm (IOSP)}$.
 \end{theorem}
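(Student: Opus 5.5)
We argue by the direct method of the calculus of variations in the weak (or weak$^*$ for $p=\infty$) topology of $\mathcal L^p$, using Zhang's complete continuity of eigenvalues \cite{zhang1}.

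\emph{Step 1: the admissible set is nonempty.} Let $\mathcal A=\{q\in\mathcal L^p:\lambda_m(q)=\lambda_*\}$. For a constant $c\in\mathbb R$ we have $\lambda_m(q_0+c)=\lambda_m(q_0)+c$, so $c=\lambda_*-\lambda_m(q_0)$ gives $q_0+c\in\mathcal A$. Hence
\[
d:=\inf\{\|q-q_0\|_{\mathcal L^p}:q\in\mathcal A\}
\]
is finite, with $d\le|\lambda_*-\lambda_m(q_0)|$.

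\emph{Step 2: a convergent minimizing sequence.} Choose $q_n\in\mathcal A$ with $\|q_n-q_0\|_{\mathcal L^p}\to d$. Then $(q_n)$ is bounded in $\mathcal L^p$. For $1<p<\infty$, reflexivity gives a subsequence with $q_n\rightharpoonup \hat q$ weakly in $\mathcal L^p$. For $p=\infty$, Banach--Alaoglu (on the separable predual $\mathcal L^1$) gives weak$^*$ convergence. In both cases $q_n\to\hat q$ weakly in $\mathcal L^1$ as well, since $\mathcal L^\infty\subset\mathcal L^{p'}$ on $[0,1]$.

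\emph{Step 3: the limit is admissible.} This is the main step. Zhang \cite{zhang1} shows that $q\mapsto\lambda_m(q)$ is completely continuous: it is continuous from $(\mathcal L^p,w)$ to $\mathbb R$, and in fact from weak convergence in $\mathcal L^1$ on bounded sets. The underlying reason is that the solutions of $-u''+qu=\lambda u$ with fixed initial data depend continuously, uniformly on $[0,1]$, on $q$ under weak convergence; the Prüfer angle then characterizes the Dirichlet eigenvalues. Therefore $\lambda_m(\hat q)=\lim\lambda_m(q_n)=\lambda_*$, i.e. $\hat q\in\mathcal A$. If we cite \cite{zhang1} only for $p=1$ weak topology, we reduce to that case via the inclusion noted in Step 2.

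\emph{Step 4: minimality.} The norm is weakly (resp. weak$^*$) lower semicontinuous, so
\[
\|\hat q-q_0\|_{\mathcal L^p}\le\liminf_{n}\|q_n-q_0\|_{\mathcal L^p}=d.
\]
Combined with $\hat q\in\mathcal A$, this gives equality, so $\hat q$ solves (IOSP).

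The hypothesis $p>1$ is used only in Step 2. In $\mathcal L^1$, bounded sequences need not have weakly convergent subsequences, since they may concentrate to measures. This is why the case $p=1$ is excluded from the theorem.
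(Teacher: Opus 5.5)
Your proof is correct and follows the same direct-method argument as the paper: a bounded minimizing sequence, a weakly convergent subsequence, Zhang's complete continuity of $\lambda_m$ to keep the limit in the constraint set, and weak lower semicontinuity of the norm. Two of your additions fill in points the paper only asserts or leaves implicit: your Step~1, where the constant shift $q_0+\lambda_*-\lambda_m(q_0)$ shows the admissible set is nonempty, and your weak$^*$ treatment of the case $p=\infty$.
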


 \begin{proof}
  Let
  $$
  S_{\lambda_*}=\{q\in \mathcal L^p|\lambda_m(q)=\lambda_*\}.
  $$
  It follows from  $\partial_q \lambda_{m}(q)\ne 0$ that $S_{\lambda_*}$ is a well-defined   1-codimension differentiable sub-manifold of $(\Lp, \|\cdot\|_{\mathcal L^p})$. For problem ${\rm (IOSP)}$, there exists a minimizing potential sequence $\{q_n\}$ such that $q_n\in S_{\lambda_*}$ $(n\in \mathbb N)$ and
  \begin{equation}\lb{1.5}
   \|q_n-q_0\|_{\mathcal L^p}\to \inf_{q\in S_{\lambda_*}}\|q-q_0\|_{\mathcal L^p}<+\infty.
  \end{equation}
  It follows that  $\|q_n\|_{\mathcal L^p}$ $(n\in \mathbb N)$ are bounded. We may assume that  $q_n\rightharpoonup \hat q$. Moreover, we  obtain that 
  \begin{equation}\lb{1.6}
   \|\hat q-q_0\|_{\mathcal L^p}\leq\liminf\limits_{n\to\infty}\|q_n-q_0\|_{\mathcal L^p}=\inf_{q\in S_{\lambda_*}}\|q-q_0\|_{\mathcal L^p}.
  \end{equation}

  Since the eigenvalues are completely continuous in the potential \cite[Theorem 5.1]{zhang1}, we have
  \[
  \la_{m}(\hat q)=\lim_{n\to\infty}\la_{m}(q_n)=\lambda_*,
  \]
  i.e. $\hat q\in S_{\lambda_*}.$ It follows from \eqref{1.6} that
  \be \lb{RT}
  \|\hat q-q_0\|_{\mathcal L^p}=\inf_{q\in S_{\lambda_*}}\|q-q_0\|_{\mathcal L^p}=\min_{q\in S_{\lambda_*}}\|q-q_0\|_{\mathcal L^p}.
  \ee
  That is, $\hat{q}$ is the solution to the problem ${\rm (IOSP)}$.
 \end{proof}
 
 \subsection{The critical equation for the optimal potential}

Next,  we assume $\lambda_m(q_0) \neq \lambda_*$. Indeed, if $\lambda_m(q_0) = \lambda_*$, the problem {\rm (IOSP)} would collapse into the trivial case, where $\hat q = q_0$.

 In this subsection, we recast the  Problem ${\rm (IOSP)}$ into a solvable class of Schr\"odinger equations via the Lagrange multiplier method. This transformation facilitates a systematic investigation of the problem through conventional analytical frameworks.

 Problem ${\rm (IOSP)}$ is a minimization problem on the aim functional $\|\hat q-q_0\|_{\mathcal  L^p}$ with a constraint
 \[
 \lambda_m=\lambda_*.
 \]
 For an exponent $p \in (1, \infty)$, the increasing homeomorphism $\phi_p(s) : \mathbb R \to \mathbb R$ is defined by
\begin{equation}\lb{phi}
\phi_p(s)=|s|^{p-2}s,\quad {\rm for}\  s\in \mathbb R 
\end{equation}
and 
\[
\phi_p^{-1}(t)=|t|^{p^*-2}t,\quad {\rm for}\  t\in \mathbb{R},
\]
where $p^*=\frac{p}{p-1}
\in (1, \infty)$ is the conjugate exponent of $p$.\\
Note that the
Fr\'echet derivatives $\partial_q \la_m = E_m^2$
and
\[
\partial_q\|\hat q-q_0\|_{\mathcal  L^p}=\|\hat q-q_0\|_{\mathcal  L^p}^{1-p}\phi_p(\hat q-q_0)
\]
It follows from the Lagrange multiplier method that the minimizing potential $\hat q$ satisfies 
\begin{equation}\lb{L}
\|\hat q-q_0\|_{\mathcal  L^p}^{1-p}\phi_p(\hat q-q_0)=cE_m^2,\quad {\rm for} \ x\in[0,1],
\end{equation}
where the constant $c = c(\hat q) \ne 0$.\\
According to \eqref{phi}, \eqref{L} is transformed into 
\begin{equation}\lb{3.4}
\phi_p(\hat q(x)-q_0(x))=\widetilde{c}E_m^2(x),
\end{equation}
where $\widetilde{c}=\frac{c}{\|\hat q-q_0\|_{\mathcal  L^p}^{1-p}}$.\\
By \eqref{3.4}, let us introduce the function
\begin{equation}\lb{um}
 u_m(x)=\sqrt{|\widetilde{c} |}E_m(x).
\end{equation}
Then we can write \eqref{3.4} as
\begin{equation}\lb{hat q1}
|\hat{q}-q_0|^{p-2}(\hat q-q_0)=\e u_m^2,
\end{equation}
where $\e={\rm sign}(\widetilde{c})$.\\
Moreover, we obtain that
\begin{equation}\lb{guanxi}
\hat q-q_0=\e\phi_p^{-1}(u_m^2)=\e |u_m|^{2p^*-2}.
\end{equation}

 According to Equation \eqref{guanxi}, the solution $\hat{q}$ of problem ${\rm (IOSP)}$ is determined by $u_m$, where $u_m$ is the solution to the critical equation in the following theorem.
 
 \begin{theorem} {{\bf(Critical equation)}}
 	\label{Th2.2}
  For problem { \rm (IOSP)}, the  ‘eigenfunction’ $u_m(x)$ satisfies
 \begin{equation}\lb{cri}
  \begin{cases}
  -u_m''+q_0u_m+\e \phi_{2p^*}(u_m)=\lambda_* u_m,\quad 0\leq x\leq1,\\
  u_m(0)=u_m(1)=0.
  \end{cases}
 \end{equation}
 \end{theorem}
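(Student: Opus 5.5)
The plan is to start from the eigenvalue equation satisfied by $E_m$ and substitute the Lagrange relation \eqref{guanxi}. Since $\hat q$ solves Problem (IOSP), we have $\lambda_m(\hat q)=\lambda_*$. Hence the normalized $m$-th eigenfunction $E_m$ of $\hat q$ satisfies
\[
-E_m''+\hat q E_m=\lambda_* E_m,\qquad E_m(0)=E_m(1)=0 .
\]
Multiplying this by the positive constant $\sqrt{|\widetilde c|}$ and using the definition \eqref{um}, the same linear equation and the same Dirichlet conditions hold with $E_m$ replaced by $u_m$. So $u_m(0)=u_m(1)=0$ is immediate, and it remains to rewrite the potential term.

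Next, we replace $\hat q$ by $q_0+\e|u_m|^{2p^*-2}$, which is exactly \eqref{guanxi}. This gives
\[
-u_m''+q_0u_m+\e |u_m|^{2p^*-2}u_m=\lambda_* u_m .
\]
By the definition \eqref{phi} with exponent $2p^*$, we have $\phi_{2p^*}(s)=|s|^{2p^*-2}s$. The nonlinear term is therefore precisely $\e\phi_{2p^*}(u_m)$, and this yields \eqref{cri}.

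The only point needing care is the justification of \eqref{guanxi} itself, namely inverting $\phi_p$ in \eqref{hat q1}. From $\phi_p(\hat q-q_0)=\e u_m^2$ we apply $\phi_p^{-1}(t)=|t|^{p^*-2}t$ to get
\[
\hat q-q_0=\e\,(u_m^2)^{p^*-1}=\e|u_m|^{2p^*-2},
\]
using $u_m^2\ge 0$ and $\phi_p^{-1}(-t)=-\phi_p^{-1}(t)$. We also need $\widetilde c\neq0$ so that $\e=\pm1$ is well defined. This holds because $\lambda_m(q_0)\ne\lambda_*$ forces $\hat q\neq q_0$, and then $c\ne0$ by \eqref{L}.

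I expect the main (mild) obstacle to be regularity: the equation should be understood in the Carathéodory sense, with $u_m\in W^{2,1}$, since $q_0\in\mathcal L^p$. The term $|u_m|^{2p^*-2}u_m$ is continuous because $u_m$ is continuous. Hence $\hat q=q_0+\e|u_m|^{2p^*-2}\in\mathcal L^p$, and the substitution is valid almost everywhere on $[0,1]$.
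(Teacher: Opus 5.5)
Your proposal is correct and follows the paper's proof. You take the eigenvalue equation $-E_m''+\hat q E_m=\lambda_* E_m$, rescale it by $\sqrt{|\widetilde c|}$, substitute $\hat q=q_0+\e|u_m|^{2p^*-2}$, and recognize $\e|u_m|^{2p^*-2}u_m=\e\phi_{2p^*}(u_m)$; your added remarks on inverting $\phi_p$, on $\widetilde c\neq 0$, and on reading the equation in the Carath\'eodory sense only make explicit what the paper leaves implicit.
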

 \begin{proof}
 The initial conditions $u_m(0)=u_m(1)=0$ are
 evident.
 For the deduction of \eqref{cri},  let us notice from \eqref{um} and the equation for eigenfunction $u_m(x; \hat q)$ that
 \[
  -u_m''+\hat q u_m=\lambda_* u_m,\quad 0\leq x\leq1.
 \]
 By using \eqref{hat q1}, this equation is just \eqref{cri}.
  \end{proof}
 
 In fact, when $q_0$ is a constant, the value of $\e$ is determined by $\lambda_*$ and $q_0$, which will be explained in the next section.


{ \section{Construction of a dynamical-system framework for Problem (ISOP)}

In this section, we construct a dynamical-system framework that yields quantitative results for Problem (ISOP). For clear exposition, the section is split into four subsections. First, we employ planar dynamical system theory via phase-plane portraits to fully classify the critical equation \eqref{cri} derived from Theorem \ref{Th2.2}. Second, we derive the explicit quantitative relation linking $\lambda_*$, $q_0$ and $\hat q$. Third, our dynamical-system framework delivers sharper bounds for potentials $ {q} \in \mathcal{L}^2$. 
 Finally, we prove the uniqueness of the optimal potential $\hat{q}$ on the full parameter space $(\lambda_*, q_0)$.
}

\subsection{Classification of critical equation via planar dynamical system approach}
In what follows, we focus on the case where $q_0$ is a constant, which corresponds to the string restoration problem with an approximately homogeneous material or the structural health monitoring for the bridge components with a nearly uniform initial stiffness (detailed practical background is referred  to Subsection 7.2). This yields the first integral of \eqref{cri}:
 \begin{equation}\lb{shouci1}
	(u_m'(x))^2-\frac{\e}{p^*}|u_m(x)|^{2p^*}+(\lambda_*-q_0) u_m^2(x)= k,
\end{equation}
 where $k=(u_m'(0))^2$.
 
 To begin with, we demonstrate that the solutions to the critical equation exhibit periodicity.


 \begin{lemma}\label{zhouqi}
 	 For the parameters $\lambda_*, q_0\in \mathbb R$, the solution $u_m$ of \eqref{cri}   has a minimal period of $\frac{2}{m}$ and satisfies
 	 \begin{equation}\label{anti}
 	 u_m(x+\frac{1}{m})=-u_m(x),\quad x\in \mathbb R.
 	 \end{equation}
 \end{lemma}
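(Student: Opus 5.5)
We treat \eqref{cri} with constant $q_0$ as the autonomous planar system
\[
u'=v,\qquad v'=(q_0-\lambda_*)u+\e\,\phi_{2p^*}(u),
\]
and extend $u_m$ to all of $\mathbb R$ as the solution of this system. Since $\phi_{2p^*}$ is $C^1$ (because $2p^*>2$), solutions are unique. The system has the conserved energy \eqref{shouci1}, is invariant under $u\mapsto -u$, and is reversible under $x\mapsto -x$. The plan has four steps.

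\textbf{Step 1: $u_m$ lies on a closed orbit.} Since $u_m(0)=0$ and $u_m\not\equiv 0$, we have $k=(u_m'(0))^2>0$. The nontrivial solution $u_m$ has the same zeros as the $m$-th eigenfunction $E_m$ of $\hat q$, so it has exactly $m-1$ simple zeros in $(0,1)$ and vanishes at $0$ and $1$. In particular it returns to the axis $u=0$ repeatedly. The level set $\{v^2-\frac{\e}{p^*}|u|^{2p^*}+(\lambda_*-q_0)u^2=k\}$ is symmetric in $u$ and in $v$. The orbit through $(0,\sqrt k)$ reaches $(0,-\sqrt k)$ at $x=1/m$, because $u_m$ changes sign at each simple zero and $v^2=k$ whenever $u=0$. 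Reversibility then forces the orbit to close.

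\textbf{Step 2: the half-period symmetry \eqref{anti}.} Let $w(x)=-u_m(x+\tfrac1m)$. By the first zero after $0$ being at $x=1/m$, we get $w(0)=0$ and $w'(0)=-u_m'(1/m)=u_m'(0)$, using the sign alternation from Step 1. By odd symmetry $w$ solves the same ODE, so uniqueness gives $w\equiv u_m$. This is exactly \eqref{anti}, and applying it twice shows that $2/m$ is a period.

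\textbf{Step 3: equal spacing of zeros.} Step 2 needs the consecutive zeros of $u_m$ to be equally spaced at distance $1/m$. This holds because the time to travel from $(0,\sqrt k)$ to $(0,-\sqrt k)$ along the upper half of the orbit equals the time along the lower half, by the symmetry $u\mapsto -u$ of the system. Hence every zero-to-zero transit takes the same time $T/2$, and the $m$ intervals between the zeros $0,x_1,\dots,x_{m-1},1$ force $T/2=1/m$.

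\textbf{Step 4: minimality of the period.} Suppose a period $\tau<2/m$ existed. Since $u_m>0$ on $(0,1/m)$ and $u_m<0$ on $(1/m,2/m)$, periodicity would require $u_m(\tau)=0$ and $u_m'(\tau)=\sqrt k>0$. The only zeros in $(0,2/m]$ are $1/m$, where the slope is negative, and $2/m$, so $\tau=2/m$.

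The main obstacle is Step 1: ruling out orbits that are not closed. In the regime $\e=1$ with $\lambda_*>q_0$ the level sets include unbounded branches and separatrices through the saddles $u^{2p^*-2}=p^*\ldots$. There we must use the fact that $u_m$ has $m+1$ zeros on $[0,1]$. Any orbit crossing $u=0$ at least twice must be bounded by the energy symmetry, and so lies inside the homoclinic/heteroclinic loop, making it periodic. Heteroclinic orbits never return to the axis, and unbounded orbits cross it at most once, so both are excluded.
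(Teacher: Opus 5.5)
Your proof is correct in substance, but it is organized differently from the paper's, and the extra structure is not needed.

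The paper runs your Step 2 at the first positive zero $x_1$, before knowing where that zero lies:
\begin{itemize}
\item \eqref{shouci1} gives $(u_m'(x_1))^2=k$;
\item the sign change at the first zero gives $u_m'(x_1)=-u_m'(0)$;
\item uniqueness applied to $-u_m(x+x_1)$ then gives $u_m(x+x_1)=-u_m(x)$.
\end{itemize}
That one step does three things at once. It closes the orbit (your Step 1). It shows the zeros are exactly $x_1,2x_1,\dots$ (your Step 3). It yields the period $2x_1$. Counting the $m-1$ interior zeros then gives $x_1=1/m$.

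In particular, the phase-portrait analysis of separatrices and unbounded branches that you single out as the main obstacle is unnecessary. A second zero exists because $u_m(1)=0$, and the nonlinearity is odd; these two facts alone exclude non-closed orbits in every regime of $(\e,\lambda_*-q_0)$, with no case analysis. This is also why the lemma can be stated before the sign of $\e$ is determined.

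What your version buys is a more geometric picture and, in Step 4, an actual argument for minimality of the period: there is no zero in $(0,2/m)$ at which $u_m'$ has the sign of $u_m'(0)$. The paper only asserts minimality.

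There are two slips to fix.
\begin{itemize}
\item In Step 1 the orbit reaches $(0,-\sqrt k)$ at the first zero $x_1$, not ``at $x=1/m$''. That location is exactly what Step 3 later derives from Step 1, so as written Steps 1 and 3 are circular. Replacing $1/m$ by $x_1$ in Step 1 fixes this.
\item What closes the orbit is the odd symmetry $(u,v)\mapsto(-u,-v)$. Equivalently, you can use the reversor $(u,v,x)\mapsto(-u,v,-x)$, obtained by composing the odd symmetry with time reversal; the orbit meets its fixed line $u=0$ twice. The reflection $x\mapsto -x$ alone does not close the orbit, since it maps the arc in $\{u>0\}$ onto itself.
\end{itemize}
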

 \begin{proof}
 Set \[A=\{x|x>0,\ u_m(x)=0\}.\] Since $u_m(1)=0$,  $A$ is not empty. Let $x_1\in A$ be  the minimum element of $A$. It follows from \eqref{shouci1} that $(u_m'(x_1))^2=(u_m'(x))^2=k$ and $|u_m'(x_1)|=|u_m'(0)|$.
 Based on $x_1$ being the first positive zero, we conclude that $u_m'(x_1)=-u_m'(0)$. Denote $\bar u_m(x):=-u_m(x+x_1)$.\\ 	Since 
 \[
 (\bar u_m(0),\bar u_m'(0))=(-u_m(0),-u_m'(0))=(0,u_m'(0))=(u_m(0),u_m'(0)),
 \]
 we can see that $\bar u_m(x)$ is  a solution of \eqref{cri}. 
 Based on the uniqueness of the solution to the first equation of \eqref{cri} under the given initial conditions, it follows that 
\[
u_m(x+x_1)=-u_m(t).
\] 
 Moreover, the positive zeros of $u_m(x)$ must be
\[
x_1<2x_1<\cdots<nx_1<\cdots.
\] 
Note that $u_m(x)$ has $m - 1$ zeros in $(0, 1)$. Consequently, we obtain that $x_1=\frac{1}{m}$. 
By satisfying \eqref{anti}, $u_m(x)$ exhibits anti-periodicity with a minimal period of $\frac{1}{m}$, which naturally implies a minimal periodicity of $\frac{2}{m}$.
 \end{proof}
 
 Based on the periodicity, we now prove the following theorem and clarify that when $\e=1$ or $-1$, the critical equation yields the two phase diagrams shown in Fig.~\ref{fig:img1} and Fig.~\ref{fig:img2}, respectively.
We will further explain the relationship between the value of $\e$ and the parameters $q_0$ and $\lambda_*$

 \begin{figure}[htbp]
  \centering
   \begin{minipage}{0.48\textwidth}
  	\includegraphics[width=\textwidth,height=6cm]{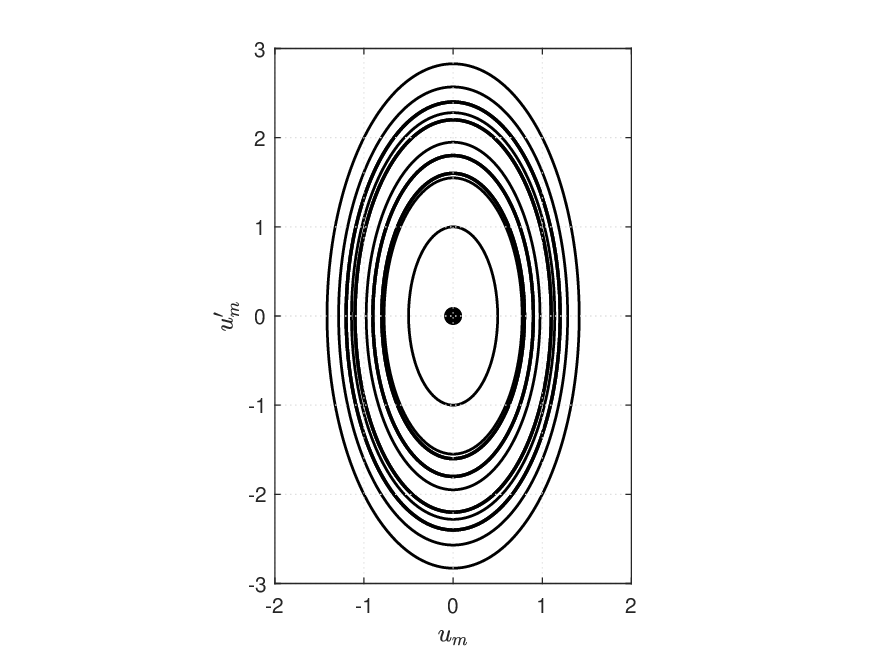}
  	\caption{Phase portrait of the  critical equation ($q_0\leq\lambda_*<q_0+m^2\pi^2$).}
  	\label{fig:img1}
  \end{minipage}
  \hfill
 \begin{minipage}{0.48\textwidth}
 	\includegraphics[width=\textwidth, height=6cm]{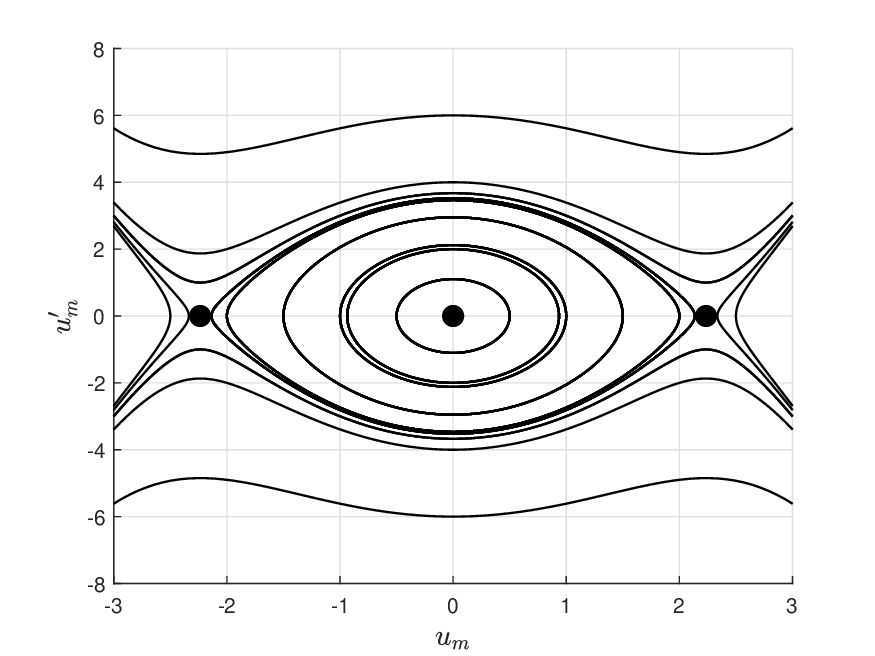}
 	\caption{Phase portrait of the   critical equation ($ q_0+m^2\pi^2<\lambda_*$).}
 	\label{fig:img2}
 \end{minipage}
 \end{figure}
\begin{figure}[ht]
	\centering
	\hspace{1.5cm}\includegraphics[width=8.5cm, height=6cm]{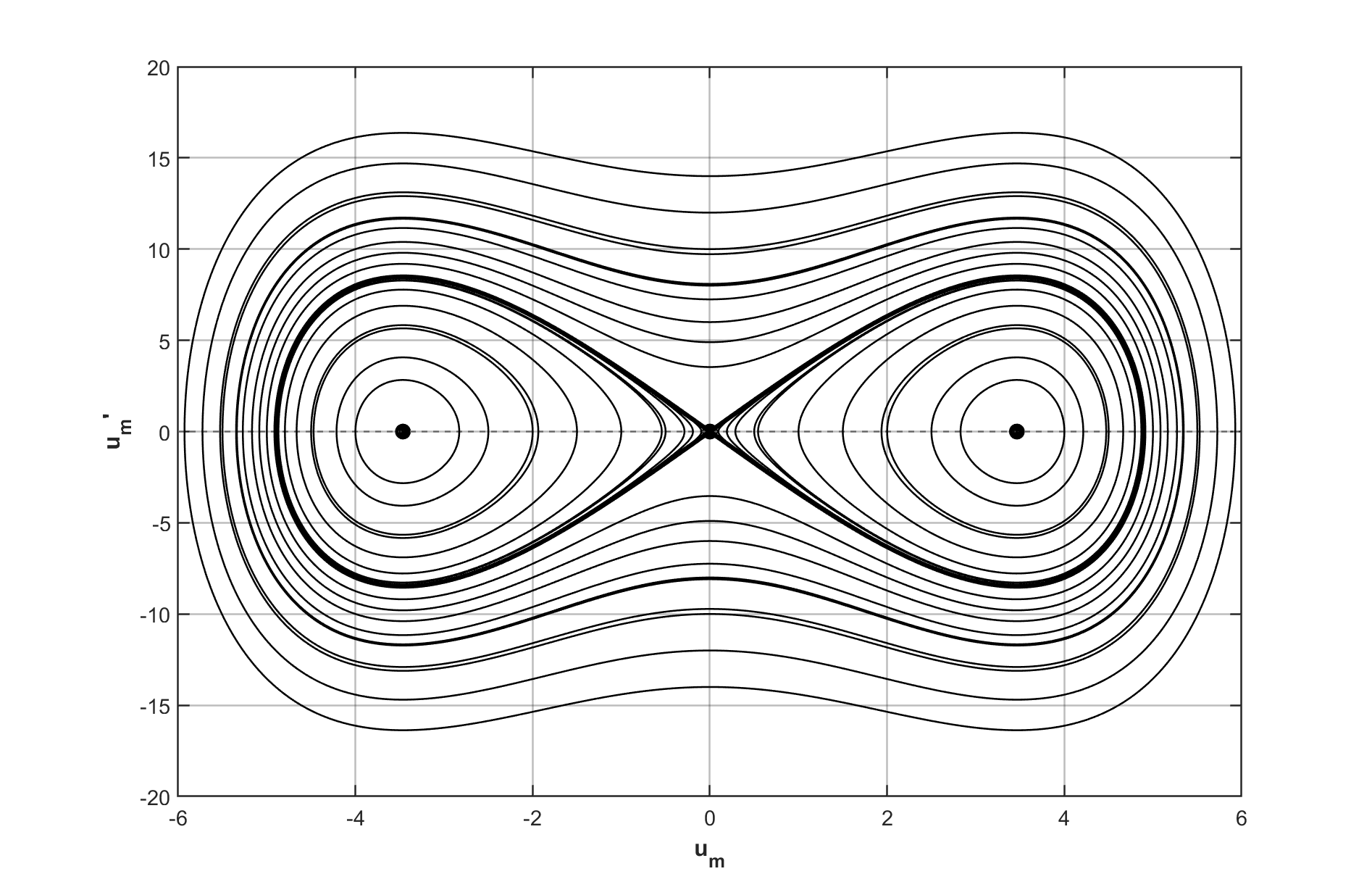}
	\caption{Phase portrait of the  critical equation ($\lambda_*<q_0-m^2\pi^2$).}
	\lb{fig3}
\end{figure}

\begin{theorem}\label{ep_sign}
 {For the case $\lambda_*>q_0$ and $\lambda_*\ne q_0+m^2\pi^2$, the value of $\e$ is given as:} 
 \begin{equation}\label{ep}
  \e=
  \begin{cases}
   +1,&\quad {\rm for}\  q_0+m^2\pi^2<\lambda_*,\\
   -1,& \quad {\rm for}\  q_0<\lambda_*<q_0+m^2\pi^2.
  \end{cases}
 \end{equation} 
\end{theorem}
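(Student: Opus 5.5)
We prove the sign formula by comparing $\lambda_m(\hat q)=\lambda_*$ with the constant-potential eigenvalue $\lambda_m(q_0)=q_0+m^2\pi^2$, using monotonicity of Dirichlet eigenvalues in the potential. The key identity is \eqref{guanxi}: $\hat q-q_0=\e|u_m|^{2p^*-2}$. This difference has the constant sign $\e$ and vanishes only on the finite zero set of $u_m$. Hence $\hat q\ge q_0$ almost everywhere with $\hat q\not\equiv q_0$ if $\e=+1$, and $\hat q\le q_0$ almost everywhere with $\hat q\not\equiv q_0$ if $\e=-1$.

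The first step is strict monotonicity: if $q_1\le q_2$ a.e. with $q_1\neq q_2$ on a set of positive measure, then $\lambda_m(q_1)<\lambda_m(q_2)$ for every $m$. I would derive this from the Courant--Fischer min-max characterization, which gives the non-strict inequality. For strictness, I would use $\partial_q\lambda_m=E_m^2$ along the segment $q_t=q_1+t(q_2-q_1)$:
\[
\lambda_m(q_2)-\lambda_m(q_1)=\int_0^1\!\!\int_0^1 (q_2-q_1)E_m^2(x;q_t)\,dx\,dt>0,
\]
since each $E_m(\cdot;q_t)$ has only finitely many zeros. Applied with $q_1,q_2\in\{q_0,\hat q\}$, this gives $\e=+1\Rightarrow \lambda_*=\lambda_m(\hat q)>\lambda_m(q_0)=q_0+m^2\pi^2$ and $\e=-1\Rightarrow \lambda_*<q_0+m^2\pi^2$. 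Taking contrapositives, and recalling that $\e\in\{\pm1\}$ because $\widetilde c\neq0$ (from $\lambda_*\ne\lambda_m(q_0)$), yields \eqref{ep}. Note that the hypothesis $\lambda_*\neq q_0+m^2\pi^2$ is exactly $\lambda_m(q_0)\neq\lambda_*$, which excludes the trivial case.

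As a consistency check, and to connect with the phase portraits, I would also verify the classification dynamically. Write $a=\lambda_*-q_0>0$ and $H(u,v)=v^2+au^2-\frac{\e}{p^*}|u|^{2p^*}$ from \eqref{shouci1}. By Lemma \ref{zhouqi}, $u_m$ traces a closed orbit around the origin with half-period $1/m$, so $\frac{1}{m}=2\int_0^{A}\frac{du}{\sqrt{k-au^2+\frac{\e}{p^*}u^{2p^*}}}$, where $A$ is the amplitude. Scaling $u=As$ compares this integral with the linear half-period $\pi/\sqrt a$. For $\e=-1$ the nonlinearity is a hardening restoring force, so the period is shorter: $\frac1m<\frac{\pi}{\sqrt a}$, i.e.\ $a<m^2\pi^2$. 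For $\e=+1$ it is softening, so the period is longer and $a>m^2\pi^2$.

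The main obstacle I expect is rigor in the strict monotonicity step, in particular ensuring that $\hat q-q_0$ is nonzero on a set of positive measure, which follows since $u_m\not\equiv0$. If the integral-along-the-segment argument needs justification of differentiability in $\mathcal L^p$, I would instead use the variational characterization with $E_m(\hat q)$ as a test function together with an induction on the $m$-dimensional test spaces.
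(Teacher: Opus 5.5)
Your proof is correct, but your main argument differs genuinely from the paper's. The paper's proof is essentially what you list as a consistency check. From the first integral \eqref{shouci1}, the amplitude relation and Lemma \ref{zhouqi}, it writes the quarter period as
\[
\frac{1}{2m}=\frac{1}{\sqrt{\lambda_*-q_0}}\int_0^1\frac{dt}{\sqrt{(1-t^2)-\frac{\varepsilon}{p^*(\lambda_*-q_0)}a_m^{2p^*-2}(1-t^{2p^*})}}
\]
and compares the integral with $\int_0^1(1-t^2)^{-1/2}\,dt=\pi/2$. The integral is $<\pi/2$ if $\varepsilon=-1$ and $>\pi/2$ if $\varepsilon=+1$, and this forces the sign.

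You instead use only $\hat q-q_0=\varepsilon|u_m|^{2p^*-2}$ together with strict monotonicity of $\lambda_m$ in the potential, comparing $\lambda_*=\lambda_m(\hat q)$ with $\lambda_m(q_0)=q_0+m^2\pi^2$. This is shorter and more general. It needs neither the periodicity lemma nor the phase-plane geometry, nor the hypothesis $\lambda_*>q_0$. It uses the constancy of $q_0$ only to evaluate $\lambda_m(q_0)$. It gives $\varepsilon=\mathrm{sign}(\lambda_*-\lambda_m(q_0))$ for every $q_0$, which contains Theorem \ref{ep_sign2} as the case $\lambda_*\le q_0<q_0+m^2\pi^2$.

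Your concern about strictness is easily handled. Since $\lambda_m$ is a simple eigenvalue, it is differentiable along the segment. The mean value theorem, applied with the strictly positive derivative $\int_0^1(q_2-q_1)E_m^2(x;q_t)\,dx$, then suffices without any continuity of the derivative; Sturm comparison would also do.

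What the paper's route buys is continuity with its quantitative part. The same identity \eqref{b} determines $a_m$ through $\mathbb V_1^{-1}$ and $\mathbb V_2^{-1}$ and feeds the error formulas of Theorem \ref{Th6}, so the sign comes out of a relation needed anyway. The price is that this route is tied to constant $q_0$ with $\lambda_*>q_0$.
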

\begin{proof}
It follows from  $\lambda_*-q_0>0$ that  the solution of \eqref{cri} is a $T$-periodic function.  	
	We introduce the  parameter $a_m$  as
	\[
	a_m\equiv\max_{x\in[0,T]}|u_m(x)|
	\]
and $a_m$ can be solved by 
\begin{equation}\label{3.5}
	-\e\frac{1}{p^*}a_m^{2p^*}+(\lambda_*-q_0)a_m^2=k.         
\end{equation}

For different values of $\epsilon$, the corresponding ranges for the parameter $a_m$ vary accordingly.

If $\epsilon = -1$, it is evident that $a_m \in (0, +\infty)$.\\
In the case of $\e= +1$, it follows from \eqref{3.5}  that 
 \begin{equation}\label{3.2}
 -\frac{1}{p^*}a_m^{2p^*}+(\lambda_*-q_0)a_m^2=k
 \end{equation}
and 
\[
\max_{a_m\in\mathbb R^+}\left((\lambda_*-q_0) a_m^2-\frac{p-1}{p}a_m^{\frac{2p}{p-1}}\right) =\left((\lambda_*-q_0)a_m^2-\frac{p-1}{p}a_m^{\frac{2p}{p-1}}\right)\bigg|_{a_m={(\lambda_*-q_0)^{\frac{p-1}{2}}}}=\frac{(\lambda_*-q_0)^p}{p}.
\]
A necessary and sufficient condition for \eqref{3.2} to be solvable is that $\lambda_*$, $q_0$ and $a_m$ fulfill an additional constraint
\[
0<pk<(\lambda_*-q_0)^p.
\]
We can obtain the range of $a_m$:
\[
a_m\in(0,{(\lambda_*-q_0)}^{\frac{p-1}{2}}).
\]

It follows form \eqref{shouci1} that
\[
u'_m(x)=\sqrt{k+\frac{\e}{p^*}u_m(x)^{2p^*}-(\lambda_*-q_0) u_m^2(x)}, \quad{ \rm for}\quad x\in[0,\frac{T}{4}]. 
\]

Combining \eqref{shouci1} and \eqref{3.2}, we obtain that
\[
\frac{T}{4}=\int_0^{\frac{T}{4}}dx=\int_0^{a_m}\frac{1}{u_m'}du_m.
\]
Set $t=\frac{u_m}{a_m}$, we have
\begin{equation}\label{b}
\frac{1}{2m}=\int_0^1\frac{dt}{\sqrt{\lambda_*-q_0}\sqrt{(1-t^2)-\frac{\e}{p^*(\lambda_*-q_0)}a_m^{2p^*-2}(1-t^{2p^*})}}.
\end{equation}
For $\lambda_*>(m\pi)^2+q_0$, it follows that 
\[
\int_0^1\frac{dt}{\sqrt{(1-t^2)-\e/(p^*(\lambda_*-q_0))a_m^{2p^*-2}(1-t^{2p^*})}}>\frac{\pi}{2},
\]
which implies $\e>0$ and $\e=+1.$ \\
Similarly, we can obtain that  $\e=-1$ for
$0<\la_m^*-q_0<m^2\pi^2$.
\end{proof}
It should be noted that the dynamical behavior of the critical equation is different when $q_0>\lambda_*$.
We present the result for   this case in the following.
\begin{theorem}\label{ep_sign2}
	If $q_0\geq\lambda_*$ in \eqref{cri}, then $\e=-1$. 
\end{theorem}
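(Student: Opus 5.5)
We argue by contradiction, exploiting the energy structure of the critical equation \eqref{cri} with constant $q_0$. Assume $q_0\ge\lambda_*$ and $\e=+1$. Then \eqref{cri} reads
\[
u_m''=(q_0-\lambda_*)u_m+\phi_{2p^*}(u_m)=\bigl((q_0-\lambda_*)+|u_m|^{2p^*-2}\bigr)u_m ,
\]
so $u_m''$ has the same sign as $u_m$ wherever $u_m\neq0$. The intended conclusion is that no nontrivial solution can vanish at both $x=0$ and $x=1$. This contradicts the fact that $u_m=\sqrt{|\widetilde c|}E_m$ with $E_m\not\equiv 0$ and $\widetilde c\neq 0$, the latter holding because $\hat q\neq q_0$ when $\lambda_m(q_0)\neq\lambda_*$.

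The key step is an integration-by-parts identity. Multiply \eqref{cri} by $u_m$, integrate over $[0,1]$, and use $u_m(0)=u_m(1)=0$ to obtain
\[
\int_0^1 (u_m')^2\,dx+(q_0-\lambda_*)\int_0^1 u_m^2\,dx+\e\int_0^1|u_m|^{2p^*}\,dx=0 .
\]
If $\e=+1$ and $q_0-\lambda_*\ge 0$, every term is nonnegative, so each vanishes. In particular $u_m'\equiv0$, and hence $u_m\equiv 0$, which is the desired contradiction. Therefore $\e=-1$.

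As a cross-check that is consistent with the phase-plane picture of Fig.~\ref{fig3}, I would also give the first-integral argument. By \eqref{shouci1} with $\e=1$, we have $(u_m')^2=k+(q_0-\lambda_*)u_m^2+\frac{1}{p^*}|u_m|^{2p^*}\ge k>0$, so $u_m$ is strictly monotone and cannot have a second zero. This contradicts Lemma~\ref{zhouqi}, which gives $u_m(1)=0$. The case $k=0$ is excluded by uniqueness, since $u_m(0)=u_m'(0)=0$ would force $u_m\equiv0$.

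The main subtlety is not the computation but justifying that $\e$ is well defined, i.e.\ that $\widetilde c\neq 0$. This follows from the Lagrange multiplier relation \eqref{L} together with the standing assumption $\lambda_m(q_0)\neq\lambda_*$. If $c=0$ we would get $\phi_p(\hat q-q_0)\equiv 0$, so $\hat q=q_0$ and $\lambda_m(q_0)=\lambda_*$, contrary to that assumption. I would state this explicitly at the start of the proof.
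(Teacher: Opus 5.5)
Your argument is correct, and its main step takes a different route from the paper's. The paper works with the first integral \eqref{shouci1}: combining it with its value at a point where $|u_m|$ attains its maximum $a_m$ gives $(u_m')^2=\frac{\varepsilon}{p^*}\bigl(|u_m|^{2p^*}-a_m^{2p^*}\bigr)+(\lambda_*-q_0)(a_m^2-u_m^2)$. When $\varepsilon=+1$ and $\lambda_*\le q_0$, the right-hand side is $\le 0$, which forces $u_m'\equiv 0$.

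Your ``cross-check'' is essentially this argument, evaluated at the zero $x=0$ instead of at the maximum. One small correction there: $u_m(1)=0$ is simply the boundary condition, not a consequence of Lemma~\ref{zhouqi}.

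Your primary argument tests \eqref{cri} against $u_m$. It uses neither the first integral nor the periodic structure, so it does not need $q_0$ to be constant: it works verbatim for any $q_0\in\mathcal L^p$ with $q_0\ge\lambda_*$ a.e. It also extends with one more line:
\begin{itemize}
\item Since $u_m\in H_0^1$, the Rayleigh quotient gives $\int_0^1\bigl((u_m')^2+q_0u_m^2\bigr)dx\ge\lambda_1(q_0)\int_0^1u_m^2\,dx$. Hence $\varepsilon=-1$ whenever $\lambda_*\le\lambda_1(q_0)$.
\item For constant $q_0$, apply Wirtinger's inequality on each interval $[j/m,(j+1)/m]$ between the zeros given by Lemma~\ref{zhouqi}. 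This gives $\int_0^1(u_m')^2dx\ge m^2\pi^2\int_0^1u_m^2dx$, hence $\varepsilon=-1$ for all $\lambda_*<q_0+m^2\pi^2$. That recovers the second case of Theorem~\ref{ep_sign} as well.
\end{itemize}

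What the paper's phase-plane route buys is coherence with the rest of the section. The same first integral and amplitude $a_m$ produce the quarter-period integrals $\mathbb V_i$ and the error formulas. For this particular statement, though, your energy identity is the more economical and more general proof.
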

\begin{proof}
	
	Note that the first integrals of critical equation is \[(u_m'(x))^2-\frac{\e}{p^*}|u_m(x)|^{2p^*}+(\la_m-q_0) u_m^2(x)= k,\quad \text{ for} \ x\in [0,1]\]
At the point where $|u(x)|$ attains its maximum value $a_m$, we have the following equation \[
	-\frac{\e}{p^*}a_m^{2p^*}+(\la_m-q_0) a_m^2= k.
	\]
	From the above two equations, we  further obtain that
	\[(u'_m(x))^2=\frac{\e}{p^*}(u_m(x)^{2p^*}-a_m^{2p^*})+(\lambda_m-q_0)(a_m^2-u_m^2(x))\geq0, \quad \text{for}\ x\in[0,1],
	\]
	which implies that $\e<0$ and $\e=-1$.
\end{proof}

\begin{remark}
In the case where the potential $q$ in \eqref{s-l} reduces to a constant $q_0$, the corresponding eigenvalue is given by $\lambda_m = q_0+m^2\pi^2$, $m=1,2,\cdots$. For the specific case where $m=1$ and $q_0 + m^2\pi^2 < \lambda_*$, the results of Theorem \ref{ep_sign} are consistent with the critical equation established in \cite{V-I}. Conversely, when $q_0 + m^2\pi^2 > \lambda_*$, we find that the value of $\epsilon$ differs, leading to a distinct shift in the dynamical behavior of the critical equation.
\end{remark}
\begin{remark}
	It is noteworthy that the dynamical characteristics of the inverse problem align with the  cases of the direct problem \cite{zhang2}  provided $\lambda_* - q_0 < m^2\pi^2$. However, the condition $\lambda_* - q_0 > m^2\pi^2$ precipitates a previously undocumented phenomenon.
\end{remark}

 According to 
 Theorem \ref{ep_sign} and Theorem \ref{ep_sign2}, the connection between the problem ${ \rm (IOSP)}$ and the nonlinear Schr\"odinger equation \eqref{cri} leads to the expression of $\hat q$.
 \begin{theorem}\label{relation}
 	For   $q_0$, $\lambda_*\in \mathbb R$, the solution $\hat q\in \mathcal{L}^p$ to problem ${ \rm (IOSP)}$ is given by the expression
	\begin{equation}\label{hat q}
	\hat q(x)=\begin{cases}
		q_0+|u_m|^{2p^*-2},\quad {\rm for}\ \lambda_*-q_0>m^2\pi^2,\\
		q_0-|u_m|^{2p^*-2},\quad {\rm for}\ \lambda_*-q_0<m^2\pi^2.
		\end{cases}
	\end{equation}
\end{theorem}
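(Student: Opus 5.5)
The formula \eqref{hat q} will follow by combining the relation \eqref{guanxi}, namely $\hat q-q_0=\e\phi_p^{-1}(u_m^2)=\e|u_m|^{2p^*-2}$, with the sign determination of $\e$ from Theorems \ref{ep_sign} and \ref{ep_sign2}. The plan has two steps. First we justify \eqref{guanxi} for a minimizer. Then we read off $\e$ in each parameter regime.

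\emph{Step 1.} By the existence theorem of Subsection 2.1, a minimizer $\hat q\in S_{\lambda_*}$ exists. Since $\lambda_m(q_0)\neq\lambda_*$, we have $\hat q\neq q_0$, so $q\mapsto\|q-q_0\|_{\mathcal L^p}$ is Fréchet differentiable at $\hat q$ for $1<p<\infty$. Also $\partial_q\lambda_m=E_m^2\neq 0$, so the Lagrange multiplier rule applies and gives \eqref{L}, hence \eqref{3.4}. Moreover $\widetilde c\neq0$, because otherwise $\hat q=q_0$. Inverting $\phi_p$ via $\phi_p^{-1}(t)=|t|^{p^*-2}t$ and setting $u_m=\sqrt{|\widetilde c|}E_m$, we obtain $\hat q-q_0=\e|u_m|^{2p^*-2}$ with $\e={\rm sign}(\widetilde c)\in\{\pm1\}$. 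By Theorem \ref{Th2.2}, $u_m$ solves the critical equation \eqref{cri}.

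\emph{Step 2.} We split the parameter space into three regimes.
\begin{itemize}
\item If $\lambda_*-q_0>m^2\pi^2$, Theorem \ref{ep_sign} gives $\e=+1$, which is the first line of \eqref{hat q}.
\item If $0<\lambda_*-q_0<m^2\pi^2$, Theorem \ref{ep_sign} gives $\e=-1$.
\item If $\lambda_*\le q_0$, Theorem \ref{ep_sign2} gives $\e=-1$.
\end{itemize}
The second and third regimes together make up $\lambda_*-q_0<m^2\pi^2$ and yield the second line of \eqref{hat q}. The excluded value $\lambda_*=q_0+m^2\pi^2=\lambda_m(q_0)$ is the trivial case $\hat q=q_0$ set aside earlier. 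Finally, $\hat q\in\mathcal L^p$ holds because $u_m$ is continuous on $[0,1]$, so $|u_m|^{2p^*-2}$ is bounded.

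\emph{Main obstacle.} The delicate point is that the sign determination must hold for \emph{every} minimizer, and hence for every solution $u_m$ of \eqref{cri} with exactly $m-1$ interior zeros. This rests on the period relation \eqref{b}, obtained from Lemma \ref{zhouqi}: half-period $1/m$ and quarter-period $1/(2m)$. To check this I would verify the monotonicity directly. The integrand in \eqref{b} is smaller than $1/\sqrt{1-t^2}$ when $\e=+1$ and larger when $\e=-1$. This holds because, for $t\in(0,1)$,
\[
\frac{1-t^{2p^*}}{1-t^2}>0 .
\]
Consequently $\frac{1}{2m}$ is compared with $\frac{\pi}{2\sqrt{\lambda_*-q_0}}$, and this comparison forces the sign of $\e$. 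For the case $\lambda_*\le q_0$, the argument of Theorem \ref{ep_sign2} is used instead: evaluate $(u_m')^2\ge0$ away from the maximum point, which excludes $\e=+1$.
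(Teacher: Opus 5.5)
Your architecture is the paper's: combine \eqref{guanxi} with the sign of $\e$ from Theorems \ref{ep_sign} and \ref{ep_sign2}. Steps 1--2 are fine as a reduction to those theorems. The problem is the direct check of the sign, which you yourself single out as the crux: the monotonicity you state is backwards. In \eqref{b} the radicand is $(1-t^2)-\frac{\e\, a_m^{2p^*-2}}{p^*(\lambda_*-q_0)}(1-t^{2p^*})$. Since $1-t^{2p^*}>0$ on $(0,1)$, a positive quantity is \emph{subtracted} when $\e=+1$, so the integrand is \emph{larger} than $1/\sqrt{1-t^2}$; it is smaller when $\e=-1$. If you carry the comparison through as you wrote it, you get $\e=+1\Rightarrow \frac{1}{2m}<\frac{\pi}{2\sqrt{\lambda_*-q_0}}$, i.e.\ $\lambda_*-q_0<m^2\pi^2$. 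That is the opposite of the formula you are trying to prove, so your verification would refute the statement rather than confirm it.

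With the correct direction, $\e=+1$ forces the integral in \eqref{b} to exceed $\pi/2$, hence $\frac{1}{2m}>\frac{\pi}{2\sqrt{\lambda_*-q_0}}$, i.e.\ $\lambda_*-q_0>m^2\pi^2$. Likewise, $\e=-1$ together with $\lambda_*>q_0$ forces $\lambda_*-q_0<m^2\pi^2$. Because $\e\in\{\pm1\}$, the contrapositives of these two implications give $\e=+1$ for $\lambda_*-q_0>m^2\pi^2$ and $\e=-1$ for $0<\lambda_*-q_0<m^2\pi^2$. You should state that contrapositive step explicitly rather than saying the comparison ``forces'' the sign.

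A quick sanity check catches this kind of slip. For $\e=+1$ the restoring force $(\lambda_*-q_0)u-|u|^{2p^*-2}u$ is softer than the linear one, so the period exceeds $2\pi/\sqrt{\lambda_*-q_0}$. Equivalently, since $\lambda_m$ is increasing in $q$, raising $\lambda_m$ above $\lambda_m(q_0)=q_0+m^2\pi^2$ requires $\hat q\ge q_0$. Your treatment of $\lambda_*\le q_0$ via Theorem \ref{ep_sign2} is correct.
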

\begin{remark}
Unlike the result $\epsilon = 1$ obtained under the condition $\lambda_* > \lambda_1(q_0)$ in existing literature \cite{V-I},  the value of $\epsilon$ is contingent upon both $q_0$ and $\lambda_*$ in our current setting. 
\end{remark}

\subsection{Establishing exact quantitative relationship between $\lambda_*$, $q_0$ and $\hat q$}
Based on the classification of critical equations established in the previous subsection, the system exhibits three distinct dynamical behaviors depending on the values of $q_0$ and $\lambda_*$, as illustrated in Fig.~\ref{fig:img1}, Fig.~\ref{fig:img2}  and Fig.~\ref{fig3}. We will provide the error $\|\hat q-q_0\|_{\mathcal L^p}$ for the three cases separately.

 For the case where $q_0<\lambda_*<q_0+m^2\pi^2$, the phase portrait of \eqref{cri} is shown in Fig.~\ref{fig:img1}. Let us introduce 
 \begin{equation}\label{V1}
\mathbb V_1(\alpha)= \int_0^1\frac{dt}{\sqrt{(1-t^2)+\frac{1}{p^*(\lambda_*-q_0)}\alpha^{2p^*-2}(1-t^{2p^*})}},\quad \alpha\in(0,+\infty).
 \end{equation}
Since $\mathbb V_1(\alpha)$ is a monotonic function with respect to $\alpha$, we have
\begin{equation}\label{a_m1}
a_m=\mathbb V_1^{-1}\left(\frac{\sqrt{\lambda_*-q_0}}{2m}\right).
\end{equation}
Set \[
\mathbb U_1(\alpha)=\int_0^1\alpha^{2p^*}(\lambda_*-q_0)^{-p}\frac{t^{2p^*}dt}{\sqrt{1-t^2
		+\frac{ \alpha^{2p^*-2}}{p^*(\lambda_*-q_0)}(1-t^{2p^*})}}, \quad \alpha \in(0,+\infty).
\]
Moreover, we obtain
\begin{equation}\label{wucha1}
\begin{split}
\|\hat q-q_0\|_{\mathcal  L^p}=&\left(\int_0^1|\hat q-q_0|^pdx\right)^{\frac{1}{p}}\\
=&\left(2m\int_0^{a_m}\frac{|u_m|^{2p^*}}{u_m'}du_m\right)^{\frac{1}{p}}\\
=&\left(\frac{2ma_m^{2p^*}}{\sqrt{\lambda_*-q_0}}\int_0^1\frac{t^{2p^*}dt}{\sqrt{1-t^2+\frac{ a_m^{2p^*-2}}{p^*(\lambda_*-q_0)}(1-t^{2p^*})}}\right)^{\frac{1}{p}}\\
=&\left(2m(\lambda_*-q_0)^{p-\frac{1}{2}}\mathbb U_1\left(\mathbb V_1^{-1}\left(\frac{\sqrt{\lambda_*-q_0}}{2m}\right)\right)\right)^{\frac{1}{p}}.
\end{split}
\end{equation}
For the case where $q_0+m^2\pi^2<\lambda_*$, the phase portrait of \eqref{cri} is shown in Fig.~\ref{fig:img2}. We make a slight adjustment and set
 \[
\mathbb V_2(\alpha)= \int_0^1\frac{dt}{\sqrt{(1-t^2)-\frac{1}{p^*(\lambda_*-q_0)}\alpha^{2p^*-2}(1-t^{2p^*})}},\quad \alpha\in(0, 1).
\]
Note that $V_2$ is also a monotonic function with respect to $\alpha$, which allows us to conclude that
\begin{equation}\label{a_m2}
a_m=\mathbb V_2^{-1}\left(\frac{\sqrt{\lambda_*-q_0}}{2m}\right)
\end{equation}

Set 
\[
\mathbb U_2(\alpha)=\int_0^1\alpha^{2p^*}(\lambda_*-q_0)^{-p}\frac{t^{2p^*}dt}{\sqrt{1-t^2
		-\frac{ \alpha^{2p^*-2}}{p^*(\la_m-q_0)}(1-t^{2p^*})}}, \quad \alpha \in(0,+\infty).
\]
Moreover, we obtain
\begin{equation}\label{wucha2}
\begin{split}
	\|\hat q-q_0\|_{\mathcal  L^p}=&\left(\int_0^1|\hat q-q_0|^pdx\right)^{\frac{1}{p}}\\
	=&\left(2m\int_0^{a_m}\frac{|u_m|^{2p^*}}{u_m'}du_m\right)^{\frac{1}{p}}\\
	=&\left(\frac{2ma_m^{2p^*}}{\sqrt{\lambda_*-q_0}}\int_0^1\frac{t^{2p^*}dt}{\sqrt{1-t^2-\frac{ a_m^{2p^*-2}}{p^*(\lambda_*-q_0)}(1-t^{2p^*})}}\right)^{\frac{1}{p}}\\
	=&\left(2m(\lambda_*-q_0)^{p-\frac{1}{2}}\mathbb U_2\left(\mathbb V_2^{-1}\left(\frac{\sqrt{\lambda_*-q_0}}{2m}\right)\right)\right)^{\frac{1}{p}}.
\end{split}  
\end{equation}
For the case where $\lambda_*<q_0$, the phase portrait of \eqref{cri} is shown in Fig.~\ref{fig3}. Noting that the solution is $T$-periodic,  we  consider the  parameter
\[
a_m\equiv\max_{x\in[0,T]}|u_m(x)|\in[(p^*(q_0-\lambda_*))^{\frac{p-1}{2}},+\infty).
\]
For $x\in[0,\frac{T}{4}]$, it follows form \eqref{shouci1} that
\[\begin{split}
u'_m(x)&=\sqrt{k-\frac{1}{p^*}u_m(x)^{2p^*}-(\lambda_*-q_0) u_m^2(x)}\\
&=\sqrt{\frac{1}{p^*}(a_m^{2p^*}-u_m(x)^{2p^*})-(q_0-\lambda_m)(a_m^2-u_m^2(x))}. 
\end{split}
\]
Note that  
\[
\begin{split}
\frac{T}{4}=\int_0^{\frac{T}{4}}dx=&\int_0^{a_m}\frac{1}{u_m'(x)}du_m(x)\\
=&\int_0^{a_m}\frac{du_m}{\sqrt{\frac{1}{p^*}(a_m^{2p^*}-u_m^{2p^*})-(q_0-\lambda_*)(a_m^2-u_m^2)}}.
\end{split}
\]
Moreover, by setting $t=\frac{u_m}{a_m}$, we obtain that
\[
\frac{1}{2m}=\int_0^1\frac{dt}{\sqrt{q_0-\lambda_*}\sqrt{\frac{a_m^{2p^*-2}}{p^*(q_0-\lambda_*)}(1-t^{2p^*})-(1-t^2)}}.
\]
Now we introduce 
\[
\mathbb V_3(\alpha)=\int_0^1\frac{dt}{\sqrt{\frac{\alpha^{2p^*-2}}{p^*(q_0-\lambda_*)}(1-t^{2p^*})-(1-t^2)}},\quad {\alpha\in[(p^*(q_0-\lambda_*))^{\frac{p-1}{2}},+\infty)}
\]
and
\[
\mathbb U_3(\alpha)=\alpha^{2p^*}(q_0-\lambda_*)^{-p}\int_0^1\frac{t^{2p^*}dt}{\sqrt{\frac{\alpha^{2p^*-2}}{p^*(q_0-\lambda_*)}(1-t^{2p^*})-(1-t^2)}}, \alpha\in(0,+\infty).
\]
Since $\mathbb V_3(\alpha)$ is also a monotonic function with respect to $\alpha$, we have \begin{equation}\label{a_m}
a_m=\mathbb V_3^{-1}\left( \frac{\sqrt{q_0-\lambda_*}}{2m}\right).
\end{equation}
	Note that $V_i(\alpha)$ ($i=1,2,3$) are monotonic functions of $\alpha$, and hence invertible.
Moreover,
\begin{equation}\label{wucha3}
\begin{split}
	\|\hat q-q_0\|_{\mathcal{L}^p}&=\left(\int_0^1|q-q_0|dx\right)^{\frac{1}{p}}\\
	&=\left(2m\int_0^{a_m}\frac{|u_m|^{2p^*}}{u_m'}du_m\right)^{\frac{1}{p}}\\
	&=\left(2m(q_0-\lambda_*)^{p-\frac{1}{2}}\mathbb  U_3\left(\mathbb V_3^{-1}\left(\frac{\sqrt{q_0-\lambda_*}}{2m}\right)\right)\right)^{\frac{1}{p}}.
\end{split}
\end{equation}

\begin{remark}
	To systematically provide quantitative results for the error in problem { \rm (IOSP)}, we further elaborate on the cases where $\lambda_* = q_0$ and $\lambda_* = q_0 + m^2 \pi^2$.	
	For $\lambda_* - q_0 = m^2 \pi^2$, it is straightforward to show that the solution to problem { \rm (IOSP)} is $\hat q = q_0$ and the error $\|\hat q-q_0\|_{\mathcal L^p}=0$.		
	For the case $\lambda_*=q_0$, we obtain $\e=-1$.   Repeat the steps from \eqref{V1} to \eqref{wucha1}, we obtain \[
	\|\hat q-q_0\|_{\mathcal L^p}=\left(\frac{4m^2}{{p^*}}\right)^p\left(\int_0^1\frac{1}{\sqrt{1-t^{2p^*}}}\right)^{2p-1}\int_0^1\frac{t^{2p^*}}{\sqrt{1-t^{2p^*}}}dt.
	\] 
	The error  is continuous with respect to $\lambda_*$, as 
	\[
\lim_{\lambda_*\to q_0+{m^2\pi^2}^+}	\|\hat q-q_0\|_{\mathcal L^p}=\lim_{\lambda_*\to q_0+{m^2\pi^2}^-}	\|\hat q-q_0\|_{\mathcal L^p}=\|\hat q-q_0\|_{\mathcal L^p}\bigg|_{\lambda_*=m^2\pi^2}.
\] 
	and
	\[
	\lim_{\lambda_*\to q_0^+}	\|\hat q-q_0\|_{\mathcal L^p}=\lim_{\lambda_*\to q_0^-}	\|\hat q-q_0\|_{\mathcal L^p}=\|\hat q-q_0\|_{\mathcal L^p}\bigg|_{\lambda_*=q_0}.
	\] 
\end{remark}

 {	
The functions $\mathbb V_1, \mathbb V_2$ and $\mathbb V_3$ serve as a bridge to derive the relationship between the parameter $\alpha_m$  with $q_0$ and $\lambda_*$. 
	Thus, the infinite-dimensional variational problem investigated in this paper is converted to a one-dimensional nonlinear equation regarding the characteristic parameter $\alpha_m$. Once the value of $\alpha_m$ is determined, the  relationships among $\hat{q}$, $q_0$, and $\lambda_*$ can be explicitly established via  \eqref{wucha1}, \eqref{wucha2} and \eqref{wucha3}, as detailed in the following theorem.}

For Problem ${ \rm (IOSP)}$, we provide the following quantitative error $\|\hat q-q_0\|_{\mathcal{L}^p}$ for all observed values $q_0$, $\lambda_*$.
\begin{theorem}
	\label{Th6}
Given the constants $q_0$ and $\lambda_*\in \mathbb R$, the quantitative relationship between the solution $\hat{q}$,  $q_0$ and  $\lambda_*$ is established through the error $\|\hat q-q_0\|_{\mathcal{L}^p}$  as 
\begin{equation}\label{result}
	\|\hat q-q_0\|_{\mathcal{L}^p}=\begin{cases}
		\left(2m(\lambda_*-q_0)^{p-\frac{1}{2}}\mathbb U_2\left(\mathbb V_2^{-1}\left(\frac{\sqrt{\lambda_*-q_0}}{2m}\right)\right)\right)^{\frac{1}{p}},\quad &{ \rm for} \ \lambda_*>q_0+m^2\pi^2,\\
		0,\quad &{ \rm for } \ \lambda_*=q_0+m^2\pi^2,\\
		\left(2m(\lambda_*-q_0)^{p-\frac{1}{2}}\mathbb U_1\left(\mathbb V_1^{-1}\left(\frac{\sqrt{\lambda_*-q_0}}{2m}\right)\right)\right)^{\frac{1}{p}},\quad &{ \rm for} \ q_0<\lambda_*<q_0+m^2\pi^2,\\
		\left(\frac{4m^2}{{p^*}}\right)^p\left(\int_0^1\frac{1}{\sqrt{1-t^{2p^*}}}\right)^{2p-1}\int_0^1\frac{t^{2p^*}}{\sqrt{1-t^{2p^*}}}dt,  \quad &{ \rm for} \ \lambda_*= q_0,\\
		\left(2m(q_0-\lambda_*)^{p-\frac{1}{2}}\mathbb U_3\left(\mathbb V_3^{-1}\left(\frac{\sqrt{q_0-\lambda_*}}{2m}\right)\right)\right)^{\frac{1}{p}}, \quad &{ \rm for} \ \lambda_*< q_0.
	\end{cases}
\end{equation}
\end{theorem}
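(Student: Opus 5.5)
The plan is to assemble Theorem \ref{Th6} from the case analysis already set up, treating the five regimes of $(\lambda_*,q_0)$ in turn. In every regime the starting point is the same. By \eqref{guanxi} we have $|\hat q-q_0|^p=|u_m|^{(2p^*-2)p}=|u_m|^{2p^*}$, since $(2p^*-2)p=2p^*$. So the whole task reduces to computing $\int_0^1|u_m|^{2p^*}dx$, where $u_m$ is the solution of the critical equation \eqref{cri}, with the sign $\e$ fixed by Theorems \ref{ep_sign} and \ref{ep_sign2}.

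First I would use Lemma \ref{zhouqi}. It says $u_m$ is $\frac{2}{m}$-periodic and anti-periodic with half-period $\frac1m$. Combined with the reflection symmetry of the first integral \eqref{shouci1} under $u\mapsto -u$ and $u'\mapsto -u'$, this shows that $[0,1]$ splits into $2m$ quarter-periods of length $\frac{1}{2m}$. On each quarter-period $|u_m|$ runs monotonically between $0$ and $a_m$. Hence
\[
\int_0^1|u_m|^{2p^*}dx=2m\int_0^{a_m}\frac{u^{2p^*}}{u'}\,du ,
\]
with $u'$ expressed through \eqref{shouci1} and \eqref{3.5} as a function of $u$ and $a_m$. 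The same change of variables applied to $\int_0^{1/(2m)}dx$ gives the period relation. After substituting $t=u/a_m$ and factoring out $\sqrt{|\lambda_*-q_0|}$, this relation becomes $\mathbb V_i(a_m)=\frac{\sqrt{|\lambda_*-q_0|}}{2m}$, with $i=1,2,3$ according to the sign of $\e$ and of $\lambda_*-q_0$. The same substitution turns the energy integral into $(\lambda_*-q_0)^{p-\frac12}\mathbb U_i(a_m)$, up to the bookkeeping of the powers $a_m^{2p^*}|\lambda_*-q_0|^{-p}$. This yields \eqref{wucha1}, \eqref{wucha2} and \eqref{wucha3}.

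The degenerate cases come next. For $\lambda_*=q_0+m^2\pi^2$ the constant $q_0$ already has $\lambda_m(q_0)=\lambda_*$, so $\hat q=q_0$ and the error is $0$. For $\lambda_*=q_0$ the linear term drops out of \eqref{shouci1} and $\e=-1$ by Theorem \ref{ep_sign2}. The period relation then reads $\frac{1}{2m}=\sqrt{p^*}\,a_m^{1-p^*}\int_0^1(1-t^{2p^*})^{-1/2}dt$, which determines $a_m$ explicitly. Substituting this $a_m$ into $2m\,a_m^{2p^*+1-p^*}\sqrt{p^*}\int_0^1 t^{2p^*}(1-t^{2p^*})^{-1/2}dt$ gives the closed form in the fourth line, up to careful exponent arithmetic. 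I would check that exponent arithmetic against the stated formula, and whether a final $1/p$ power is intended.

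The main obstacle is justifying that $\mathbb V_i^{-1}$ is well defined, that is, that each $\mathbb V_i$ is strictly monotone with range containing $\frac{\sqrt{|\lambda_*-q_0|}}{2m}$ on its admissible interval for $\alpha$. For $\mathbb V_1$ the integrand decreases in $\alpha$, since the term $\alpha^{2p^*-2}(1-t^{2p^*})$ is added with a positive coefficient. Moreover $\mathbb V_1\to\frac\pi2$ as $\alpha\to0$ and $\mathbb V_1\to 0$ as $\alpha\to\infty$, which matches exactly the condition $\sqrt{\lambda_*-q_0}<m\pi$. For $\mathbb V_2$ the integrand increases from $\frac\pi2$ and blows up at the endpoint where the square root degenerates, using $1-t^{2p^*}\le p^*(1-t^2)$; this matches $\sqrt{\lambda_*-q_0}>m\pi$. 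For $\mathbb V_3$ one argues similarly, decreasing from $+\infty$ at the left endpoint to $0$. Granting this monotonicity, which the paper asserts before Theorem \ref{Th6}, $a_m$ is uniquely determined in each regime. Substituting it into the energy integrals then yields \eqref{result}.
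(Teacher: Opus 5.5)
Your route is the paper's own. Theorem \ref{Th6} is just the assembly of \eqref{wucha1}, \eqref{wucha2}, \eqref{wucha3} and the preceding Remark, obtained exactly as you describe: anti-periodicity, splitting into $2m$ quarter-periods, the substitution $t=u/a_m$, and inversion of $\mathbb V_i$. Cases one, two, three and five go through as you propose. Your monotonicity and range argument for $\mathbb V_1,\mathbb V_2,\mathbb V_3$ is a genuine addition, since the paper only asserts invertibility. The degeneracy endpoint you use for $\mathbb V_2$, namely $\alpha=(\lambda_*-q_0)^{(p-1)/2}$, is the correct one; the paper's stated domain $(0,1)$ is not.

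The gap is the case $\lambda_*=q_0$. There you assert that your computation ``gives the closed form in the fourth line'' but leave the arithmetic unchecked, and it does not give that line. Set $I_0=\int_0^1(1-t^{2p^*})^{-1/2}dt$ and $J=\int_0^1t^{2p^*}(1-t^{2p^*})^{-1/2}dt$. Your period relation gives $a_m^{p^*-1}=2m\sqrt{p^*}\,I_0$. Since $p^*+1=\frac{2p-1}{p-1}=(2p-1)(p^*-1)$, this yields $a_m^{p^*+1}=(2m\sqrt{p^*}\,I_0)^{2p-1}$, and hence
\[
\|\hat q-q_0\|_{\mathcal L^p}^p=2m\sqrt{p^*}\,a_m^{p^*+1}J=(4m^2p^*)^p\,I_0^{2p-1}J .
\]
This differs from the fourth line of \eqref{result} in two ways. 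There $p^*$ sits in the denominator rather than the numerator, and the outer power $\frac1p$ is missing; you noticed only the second of these. So that line, as printed (and as repeated in the paper's Remark), is false, and no argument can prove it. Your proof should conclude with $\|\hat q-q_0\|_{\mathcal L^p}=4m^2p^*\,I_0^{(2p-1)/p}J^{1/p}$ and flag the misprint.

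Two checks confirm the corrected value:
\begin{itemize}
\item For $p=2$, $m=1$ one has $u''=-u^3$ and $a_1=2\sqrt2\,I_0$, so $\|\hat q-q_0\|_{\mathcal L^2}^2=2\sqrt2\,a_1^3J=64I_0^3J$. The printed formula gives $4I_0^3J$.
\item Let $\lambda_*\to q_0^{\pm}$ in the $\mathbb U_1$ and $\mathbb U_3$ lines, after absorbing $\sqrt{|\lambda_*-q_0|}$ into the square roots. The limit reproduces $2m\sqrt{p^*}\,a_m^{p^*+1}J$. So only the corrected value is compatible with the continuity at $\lambda_*=q_0$ claimed in the Remark.
\end{itemize}
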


\subsection{Dynamical system framework yields a sharper result for $ {q} \in \mathcal{L}^2$}
Recall that in Section 3, we established the relationship between $\hat{q}$, $q_0$, and $\lambda_*$ via the error $\|\hat q-q_0\|_{\mathcal L^p}$. In this section, we revisit this relationship for the inverse spectral problem of the principal eigenvalue, specifically under the condition $q \in \mathcal{L}^2$. By employing elliptic functions, the dependence of the error $\|\hat q-q_0\|_{\mathcal L^p}$ on $\lambda_*$ and $q_0$ becomes more transparent, which further supports the numerical results presented in Fig.~\ref{fig5}.

Set $A = \frac{\alpha^2}{2(\lambda_*-q_0)}$, $B = \frac{\alpha^2}{2(q_0-\lambda_*)}$.
According to the previous section, the following functional expressions can be derived in terms of elliptic functions.

\[\mathbb V_1(\alpha) = \frac{1}{\sqrt{1+A}} \mathbb{E}_1\left( \frac{-A}{1+A}\right),\]

\[\mathbb U_1(\alpha) = \frac{4A}{\sqrt{1+A}} \left[ \mathbb{E}_2\left( \frac{-A}{1+A} \right) - \mathbb{E}_1\left( \frac{-A}{1+A} \right) \right],\]
\[\mathbb V_2(\alpha) = \frac{1}{\sqrt{1-A}} \mathbb{E}_1\left( \frac{A}{1-A}\right),\]

\[\mathbb U_2(\alpha) = \frac{4A}{\sqrt{1-A}} \left[ \mathbb{E}_1\left( \frac{A}{1-A} \right) - \mathbb{E}_2\left( \frac{A}{1-A} \right) \right],\]

\[\mathbb V_3(\alpha) = \frac{1}{\sqrt{B-1}} \mathbb{E}_1\left( \frac{-B}{B-1} \right),\]

\[\mathbb U_3(\alpha) = \frac{4B}{\sqrt{B-1}} \left[ \mathbb{E}_2\left( \frac{-B}{B-1} \right) - \mathbb{E}_1\left( \frac{-B}{B-1} \right) \right],\]

	where 
	\[
	\begin{split}
		\mathbb E_1(s)&=\int_0^1\frac{1}{\sqrt{(1-t^2)(1-st^2)}}dt,
		\\
		\mathbb E_2(s)&=\int_0^1\frac{\sqrt{1-st^2}}{\sqrt{1-t^2}}dt.
	\end{split}
	\]
	
	Moreover,  we can numerically investigate the relationship between $ \lambda_*$, $q_0$ and $\|\hat q-q_0\|_{\Lp}$ (see Fig. \ref{fig5} for the case $m=1$, $p=2$).
	\begin{figure}[ht]
		\centering
			\hspace{1.5cm}\includegraphics[width=8.5cm, height=6cm]{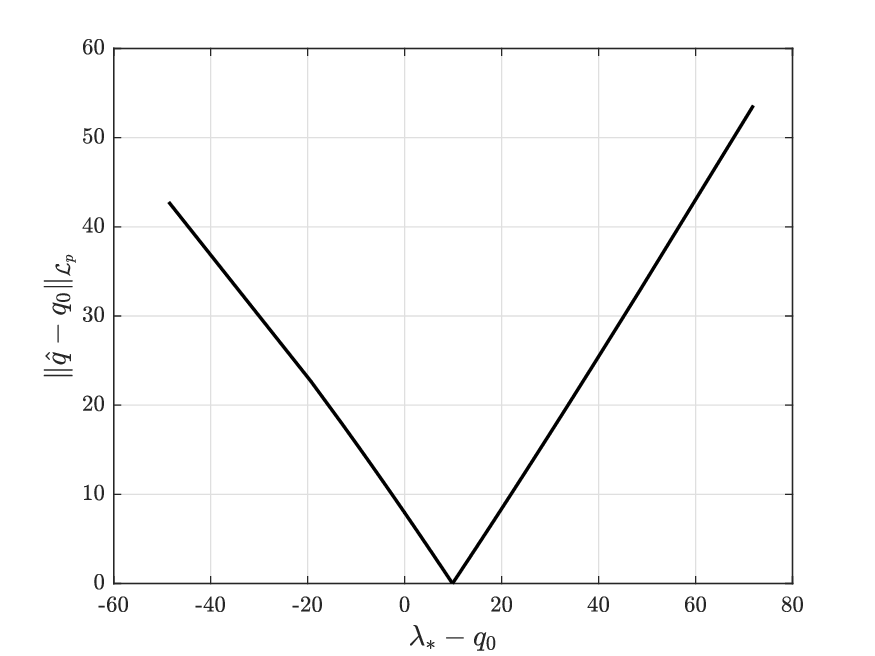}
		\caption{The relationship between  $\lambda_*-q_0$ and $\|\hat q-q_0\|_{\mathcal{L}^p}$ $(p=2)$.}
		\lb{fig5}
	\end{figure}
	
	Furthermore, we reconstruct the explicit analytical solution $u_1$ and the potential $\hat q$.
	
	For $\lambda_*-q_0>\pi^2$, by integrating the Equation \eqref{cri}, we obtain 
	\begin{equation}\label{xishu1}
		\begin{split}
			(u_1')^2&=\frac{1}{2}u_1^4-(\lambda_*-q_0)u_1^2+(\lambda_*-q_0)a_1^2-\frac{1}{2}a_1^4\\
			&=\frac{1}{2}(a_1^2-u^2)[2(\lambda_*-q_0)-a_1^2-u_1^2]>0
		\end{split}
	\end{equation}
	and 
	\[
	\lambda_*-q_0>\frac{a_1^2+u_1^2}{2}\geq\frac{a_1^2}{2},
	\]
	where 
	\[
	a_1=\max_{x\in[0,1]}|u_1(x)|.
	\]
	Let us introduce the Jacobi elliptic function to express the solution of the equation $u_1(x)=a_1 sn(\omega_1 x;k_1)$, where ${ sn}:(\mathbb R; [0,1])\to [0,1]$,  $sn(z_1;z_2)=\sin\psi$ and   \[
	z_1=\int_0^{\psi}\frac{d\theta}{\sqrt{1-z_2^2\sin^2\theta}}.
	\]
	Now, we specify the values for the parameters $\omega_1$ and $k_1$.
	Note that \[
	u_1'=a_1\omega_1\cdot \text{cn}(\omega_1 x;k_1)\cdot \text{dn}(\omega_1 x;k_1),
	\]
	where 
	\[
	\begin{split}
		\text{cn}^2(\omega_1 x;k_1)+\text{sn}^2(\omega_1 x; k_1)&=1,\\
		k_1^2\text{sn}^2(\omega_1 x;k_1)+\text{dn}^2(\omega_1 x;x)&=1.
	\end{split}
	\]
	It follows that  
	\begin{equation}\label{xishu2}
		\begin{split}
			(u_1')^2=&a_1^2\omega_1^2(1-{\rm sn}^2(\omega_1 x;k_1))\cdot(1-k_1^2{\rm sn}^2(wx;k_1))\\
			=&\frac{\omega_1^2k_1^2}{a_1^2}(a_1^2-u_1^2)\left(\frac{a_1^2}{k_1^2}-u_1^2\right).
		\end{split}
	\end{equation}
	Combining \eqref{xishu1} with \eqref{xishu2}, we obtain that 
	\begin{equation}
		\begin{split}
			k_1 &= \sqrt{\frac{a_1^2}{2(\lambda_* - q_0) - a_1^2}} = \sqrt{\frac{\left[ \mathbb{V}_2^{-1} \left( \frac{\sqrt{\lambda_* - q_0}}{2} \right) \right]^2}{2(\lambda_* - q_0) - \left[ \mathbb{V}_2^{-1} \left( \frac{\sqrt{\lambda_* - q_0}}{2} \right) \right]^2}},\\
			\omega_1&=\sqrt{\lambda_*-q_0-\frac{a_1^2}{2}}= \sqrt{\frac{2(\lambda_* - q_0) - \left[ \mathbb{V}_2^{-1} \left( \frac{\sqrt{\lambda_* - q_0}}{2} \right) \right]^2}{2}}.
		\end{split}
	\end{equation}
	It follows that \[u(x)=
	\pm \mathbb{V}_2^{-1} \left(\frac{\sqrt{\lambda_*-q_0}}{2}\right)\text{sn}(\omega_1x,k_1), \quad { \rm for} \quad \lambda_* > q_0 + \pi^2.
	\]
	
	For the case $\lambda_*-q_0<\pi^2$, similarly it follows from \eqref{cri} that 
	\[
	(u_1')^2=\frac{1}{2}(a_1^2-u_1^2)\left(\frac{a_1^2+u_1^2}{2}+\lambda_*-q_0\right).
	\]
	Set $u_1(x)=a_1\text{cn}\left( 2\mathbb E_1(k)x - \mathbb E_1(k); k \right)$, where ${ \text{cn}}:(\mathbb R; [0,1])\to [0,1]$,  $\text{cn}(z_1; z_2)=\cos\psi$ and  \[
	z_1=\int_0^{\psi}\frac{d\theta}{\sqrt{1-z_2^2\sin^2\theta}}.
	\]
	It follows that 
	\[
	u_1''(x) = -4\mathbb E_1(k) ^2(1 - 2k^2) u_1 - \frac{8\mathbb E_1(k)^2 k^2}{a_1^2} u_1^3.
	\]
	Moreover, 
	\[
	k=\sqrt{\frac{a_1^2}{2(\lambda_*-q_0)+2a_1^2}}=\begin{cases}
		k_2=\sqrt{\frac{ \mathbb V_1^{-1}\left(\frac{\sqrt{\lambda_*-q_0}}{2}\right)}{2(\lambda_*-q_0)+2 \mathbb V_1^{-1}\left(\frac{\sqrt{\lambda_*-q_0}}{2}\right)^2}}, \quad \text{for}\  q_0<\lambda_*<q_0+\pi^2,\\
		k_3=	\sqrt{\frac{ \mathbb V_3^{-1}\left(\frac{\sqrt{q_0-\lambda_*}}{2}\right)}{2(\lambda_*-q_0)+2 \mathbb V_3^{-1}\left(\frac{\sqrt{q_0-\lambda_*}}{2}\right)^2}},\quad\text{for}\  \lambda_*<q_0.
	\end{cases}.
	\]
	Now we give the expression for $u_1$:
	\begin{equation}
		u_1=\begin{cases}
			\pm \mathbb V_1^{-1}\left(\frac{\sqrt{\lambda_*-q_0}}{2}\right)\text{cn}\left(2\mathbb E_1(k_2)x - \mathbb E_1(k_2); k_2 \right),\quad\text{for}\   q_0<\lambda_*<q_0+\pi^2,\\
			\pm \mathbb V_3^{-1}\left(\frac{\sqrt{\lambda_*-q_0}}{2}\right)\text{cn}\left(2\mathbb E_1(k_3)x - \mathbb E_1(k_3); k_3 \right),\quad\text{for}\   \lambda_*<q_0.
		\end{cases}
	\end{equation}
	For the case where $\lambda_*=q_0$, we have 
	\[
	u_1(x) = k_4 \cdot \frac{1}{\sqrt{2}} \cdot \frac{\text{sn}(\sqrt{2}k_4x; \frac{1}{\sqrt{2}})}{\text{dn}(\sqrt{2}k_4x; \frac{1}{\sqrt{2}})},
	\]
	where
	\[
	k_4=2\int_0^1\frac{1}{\sqrt{\frac{1}{2}(1-t^{4})}}dt.
	\]
	Furthermore, based on \eqref{hat q}, we obtain the following theorem.
	\begin{theorem}\label{3.10}
		\label{Th6.1}
		For $q_0, \lambda_* \in \mathbb{R}$, $m=1$ and $p=2$, the solution to Problem (IOSP) can be explicitly expressed as follows
		\[
		\hat q=\begin{cases}
			q_0+\left(\mathbb{V}_2^{-1} \left(\frac{\sqrt{\lambda_*-q_0}}{2}\right)\rm{sn}(\omega_4x;k_1)\right)^2,  \quad&\rm{for}\   \lambda_* > q_0 + \pi^2,\\
			q_0,\quad &\rm{for}\ \lambda_*=q_0+\pi^2,\\
			q_0-\left( \mathbb V_1^{-1}\left(\frac{\sqrt{\lambda_*-q_0}}{2}\right){\rm cn}\left(2\mathbb E_1(k_2)x - \mathbb E_1(k_2); k_2 \right)\right)^2,\quad &{\rm for}\  q_0<\lambda_*<q_0+\pi^2,\\
			q_0-\left( \mathbb V_3^{-1}\left(\frac{\sqrt{\lambda_*-q_0}}{2}\right){\rm cn}\left(2\mathbb E_1(k_3)x - \mathbb E_1(k_3); k_3 \right)\right)^2,\quad &{\rm for}\  \lambda_*<q_0,\\
			q_0- \left(\frac{k_4}{\sqrt{2}} \frac{{\rm sn}(\sqrt{2}k_4x; \frac{1}{\sqrt{2}})}{{\rm  dn}(\sqrt{2}k_4x; \frac{1}{\sqrt{2}})}\right)^2,\quad &{\rm for}\ \lambda_*=q_0.
		\end{cases}
		\]
	\end{theorem}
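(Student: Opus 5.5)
The plan is to combine the representation of Theorem \ref{relation} with the explicit elliptic-function solutions of the critical equation \eqref{cri} derived just above. Specialise to $m=1$ and $p=2$, so that $p^*=2$ and $2p^*-2=2$. Then \eqref{guanxi} reads $\hat q-q_0=\e u_1^2$, and \eqref{cri} becomes the cubic Duffing-type equation
\[
-u_1''+(q_0-\lambda_*)u_1+\e u_1^3=0,\qquad u_1(0)=u_1(1)=0 .
\]
Theorems \ref{ep_sign} and \ref{ep_sign2} fix the sign: $\e=+1$ when $\lambda_*>q_0+\pi^2$, and $\e=-1$ in every other nontrivial case, including $\lambda_*\le q_0$. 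So the whole proof reduces to identifying $u_1$ explicitly in each regime and then squaring.

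The regimes would be handled in turn.

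\textbf{Regime $\lambda_*>q_0+\pi^2$.} Compare the first integral \eqref{xishu1} with the identity \eqref{xishu2} for $a_1\,\mathrm{sn}(\omega_1x;k_1)$. Matching coefficients of $u_1^2$ and $u_1^4$ gives $k_1$ and $\omega_1$. Both sides vanish at $x=0$ and have the same first integral, so uniqueness for the initial value problem gives $u_1=\pm a_1\,\mathrm{sn}(\omega_1x;k_1)$. The amplitude comes from \eqref{a_m2}: $a_1=\mathbb V_2^{-1}(\sqrt{\lambda_*-q_0}/2)$. Lemma \ref{zhouqi} (minimal period $2$) guarantees the Dirichlet condition at $x=1$, which here means $\omega_1=2K(k_1)$. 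Squaring yields the first line of the theorem, the sign $\pm$ disappearing.

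\textbf{Regime $q_0<\lambda_*<q_0+\pi^2$.} Here $\e=-1$. Differentiate the ansatz $a_1\,\mathrm{cn}(2\mathbb E_1(k)x-\mathbb E_1(k);k)$ twice and match it to the cubic equation; this identifies $k=k_2$ and the amplitude $\mathbb V_1^{-1}(\sqrt{\lambda_*-q_0}/2)$ via \eqref{a_m1}. The shift by $\mathbb E_1(k)$ places the zeros of $\mathrm{cn}$ at $x=0,1$.

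\textbf{Regime $\lambda_*<q_0$.} The same computation applies with $\mathbb V_3$ and \eqref{a_m}, giving $k_3$.

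\textbf{Regime $\lambda_*=q_0$.} The equation is $u''=-u^3$, solved by the lemniscatic form $\frac{k_4}{\sqrt2}\,\mathrm{sn}/\mathrm{dn}$ with modulus $1/\sqrt2$. This can be checked directly from $(u')^2=\tfrac12(a^4-u^4)$.

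\textbf{Regime $\lambda_*=q_0+\pi^2$.} This is the trivial case: $\lambda_1(q_0)=\lambda_*$, hence $\hat q=q_0$.

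In every nontrivial regime, substituting $u_1$ into $\hat q=q_0+\e u_1^2$ produces the stated formula.

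The main obstacle is verifying that the elliptic parameters are consistent with the boundary conditions and not merely with the ODE. Matching coefficients fixes $k$ and $\omega$ as functions of $a_1$; one must then check that the half-period condition $1/(2m)=\int_0^{a_1}du/u'$ from Theorem \ref{ep_sign}, rewritten through \eqref{b} in Legendre form, is exactly what selects $a_1=\mathbb V_i^{-1}(\cdot)$. I would do this by substituting $t=\sin\theta$ in $\mathbb V_i$ to reduce it to $K(k)/\omega$. The step needing the most care is the $\lambda_*<q_0$ case, where the quantity under the square root in the formula for $k_3$ must stay positive because $a_1^2\ge 2(q_0-\lambda_*)$. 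Since the statement is really a synthesis of these computations with Theorem \ref{relation}, the remaining work is bookkeeping of these identifications.
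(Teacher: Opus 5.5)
\emph{Verdict: genuine gap.} Your route is the paper's own. You specialise \eqref{cri} to $u''=(q_0-\lambda_*)u+\varepsilon u^3$, fix $\varepsilon$ by the sign theorems, match a Jacobi ansatz to the first integral, read the amplitude off $\mathbb V_i^{-1}$, and substitute into $\hat q=q_0+\varepsilon u_1^2$. The strategy is sound. However, the step you defer as bookkeeping does not close for the formulas as displayed, and the paper's derivation contains the same slips. So what your argument actually proves is a corrected theorem.

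The decisive failure is the case $\lambda_*=q_0$. Put $K(k)=\mathbb E_1(k^2)$ and $s(z)=\mathrm{sn}(z;\tfrac{1}{\sqrt2})/\mathrm{dn}(z;\tfrac{1}{\sqrt2})$. Then $k_4=2\int_0^1dt/\sqrt{\tfrac12(1-t^4)}=2K(\tfrac1{\sqrt2})$ and $(s')^2=1-\tfrac14s^4$. Hence the displayed $u=\tfrac{k_4}{\sqrt2}\,s(\sqrt2\,k_4x)$ satisfies $(u')^2=k_4^4-u^4$, i.e.\ $u''=-2u^3$. The $u^4$ coefficient differs from that in $(u')^2=\tfrac12(a^4-u^4)$, so no amplitude reconciles them, and the direct check you propose fails. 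Moreover, the first positive zero of this $u$ is at $x=2K/(\sqrt2\,k_4)=1/\sqrt2$. So $u(1)\neq0$ and $u$ changes sign in $(0,1)$, which means the displayed $\hat q$ is not the minimiser. The correct profile has argument $k_4x$:
\[
u_1=\frac{k_4}{\sqrt2}\,s(k_4x)=k_4\,\mathrm{cn}\bigl(k_4x-K(\tfrac1{\sqrt2});\tfrac1{\sqrt2}\bigr),\qquad \hat q=q_0-u_1^2.
\]
This is just the $\lambda_*\to q_0$ limit of your cn computation, with $k^2=\tfrac12$ and $\omega=a_1=2K(\tfrac1{\sqrt2})$.

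The cn branches need the same care.
\begin{itemize}
\item With the paper's $\mathbb E_1(s)=\int_0^1dt/\sqrt{(1-t^2)(1-st^2)}$, the zeros of $\mathrm{cn}(\cdot;k)$ are the odd multiples of $\mathbb E_1(k^2)$, not of $\mathbb E_1(k)$. So your claim that the shift by $\mathbb E_1(k)$ puts the zeros at $x=0,1$ is false as written; the argument must be $2K(k)x-K(k)$.
\item Compatibility of this with the coefficient matching, namely $2K(k)=\sqrt{\lambda_*-q_0+a_1^2}$, is exactly $\mathbb V_1(a_1)=\tfrac12\sqrt{\lambda_*-q_0}$ (resp.\ $\mathbb V_3(a_1)=\tfrac12\sqrt{q_0-\lambda_*}$). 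It comes out of the substitution $t=\cos\theta$. The substitution $t=\sin\theta$ you propose produces the negative parameters $-A/(1+A)$ and $-B/(B-1)$, and would need an imaginary-modulus transformation.
\item Your matching gives $k^2=a_1^2/(2(\lambda_*-q_0)+2a_1^2)$. This is not the displayed $k_2,k_3$, whose numerators carry $a_1$ rather than $a_1^2$.
\item The amplitude in the $\lambda_*<q_0$ line must be $\mathbb V_3^{-1}(\tfrac12\sqrt{q_0-\lambda_*})$; the displayed $\sqrt{\lambda_*-q_0}$ is imaginary.
\item $\omega_4$ must be read as $\omega_1=2K(k_1)$.
\end{itemize}
Your branch $\lambda_*>q_0+\pi^2$ is fine as written. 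After these corrections the other branches follow by your argument. But you have to state and verify the corrected formulas rather than assert that the displayed ones check out.
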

	\begin{remark}
		Recall that the expression for $\|\hat q - q_0\|_{\mathcal L^p}$ derived in Section 3 explicitly characterizes the relationship between $\hat q$, $q_0$, and $\lambda_*$. Geometrically, this implies that $\hat q$ is confined to a fixed sphere within the $\mathcal L^p$ space.
		However, the special case where ${q}\in\mathcal{L}^2$ yields a significantly sharper result: an exact analytical expression for $\hat{q}$. This is to say, Problem (IOSP) is completely resolved in the ${q}\in\mathcal{L}^2$ setting.  Such a result serves as a foundational tool with substantial implications for various problems in mathematical physics and engineering.
		
	\end{remark}

\subsection{Uniqueness of the optimal potential $\hat{q}$}
Consistent with the previous planar dynamical systems method, the relationship among $\hat q$, $\lambda_*$, and $q_0$ is constructed via the error $\|\hat q-q_0\|_{\mathcal L^p}$. Furthermore, $\hat q$ is defined as a unique function of $\lambda_*$ and $q_0$.
\begin{theorem}\label{3.12}
	Given any $\lambda_*$,  $q_0\in \mathbb R$, problem { \rm (IOSP)} admits a unique solution $\hat q$.
\end{theorem}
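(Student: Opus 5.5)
Existence is already settled by the existence theorem of Section 2, so the task is to show that any two minimizers coincide. Our plan is to show that every minimizer $\hat q$ is forced into a one-parameter family, and that the parameter is pinned down by $\lambda_*$, $q_0$ and $m$.

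Let $\hat q$ be any solution of Problem (IOSP). If $\lambda_m(q_0)=\lambda_*$, i.e. $\lambda_*=q_0+m^2\pi^2$, then $\hat q=q_0$ trivially. Otherwise the Lagrange multiplier argument gives \eqref{guanxi}: $\hat q=q_0+\e|u_m|^{2p^*-2}$, where $u_m$ solves the critical equation \eqref{cri} of Theorem \ref{Th2.2}.

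The first step is to fix the sign. By Theorem \ref{ep_sign} and Theorem \ref{ep_sign2}, $\e$ depends only on $(\lambda_*,q_0,m)$, not on the chosen minimizer. So any two minimizers $\hat q,\tilde q$ share the same $\e$ and correspond to solutions $u_m,\tilde u_m$ of the same autonomous equation \eqref{cri}.

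The second step is to reduce $u_m$ to a scalar datum. By Lemma \ref{zhouqi}, $u_m$ is $2/m$-periodic, antiperiodic with half-period $1/m$, and has exactly $m-1$ interior zeros. Since \eqref{cri} is an autonomous second-order ODE with $u_m(0)=0$, the solution is determined by $u_m'(0)$, or equivalently by the energy $k$ in \eqref{shouci1}, or equivalently by the amplitude $a_m=\max|u_m|$. The sign of $u_m'(0)$ is irrelevant, since $\hat q$ depends only on $|u_m|$. The quarter-period computation then yields the scalar equation
\[
\mathbb V_i(a_m)=\frac{\sqrt{|\lambda_*-q_0|}}{2m},
\]
with $i=1,2,3$ according to the regime ($i=1$ also covers $\lambda_*=q_0$ by the limiting formula).

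The third step is to show that this equation has at most one root, i.e. that $\mathbb V_i$ is strictly monotone on its domain. This is the main obstacle; the paper asserts it, but we would verify it directly.

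For $\mathbb V_1$ the integrand $\bigl((1-t^2)+c\,\alpha^{2p^*-2}(1-t^{2p^*})\bigr)^{-1/2}$ with $c>0$ is strictly decreasing in $\alpha$ for each $t\in(0,1)$. For $\mathbb V_2$, where the coefficient is negative, it is strictly increasing. Differentiation under the integral sign is justified by dominated convergence, using $1-t^{2p^*}\le C(1-t^2)$.

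For $\mathbb V_3$ the expression $\frac{\alpha^{2p^*-2}}{p^*(q_0-\lambda_*)}(1-t^{2p^*})-(1-t^2)$ is strictly increasing in $\alpha$, so $\mathbb V_3$ is strictly decreasing on $[(p^*(q_0-\lambda_*))^{(p-1)/2},\infty)$. The delicate point is near $t=1$. There we compare $1-t^{2p^*}\sim 2p^*(1-t)$ with $1-t^2\sim 2(1-t)$; positivity of the radicand near $t=1$ is exactly the lower bound on $\alpha$, so the integral converges on the admissible range and monotonicity holds there.

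With monotonicity in hand, $a_m$ is uniquely determined. Hence $u_m$ is unique up to sign, and $\hat q=q_0+\e|u_m|^{2p^*-2}$ is uniquely determined, as in Theorem \ref{relation}. This argument uses no convexity of the constraint set, which is why it holds on the full parameter space.
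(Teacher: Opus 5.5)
Your proposal is correct and follows the paper's own route. Any minimizer yields a solution of the critical equation \eqref{cri} with $\varepsilon$ fixed by Theorems \ref{ep_sign} and \ref{ep_sign2}; the amplitude $a_m$ is pinned down by the strictly monotone quarter-period functions $\mathbb V_i$; and initial-value uniqueness gives $u_m$ up to sign, hence $\hat q$ via \eqref{hat q}. The only difference is that you actually check the monotonicity of $\mathbb V_i$, which the paper merely asserts.

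One small correction to that check. For $\mathbb V_3$, the radicand near $t=1$ behaves like $2(1-t)\bigl(\frac{\alpha^{2p^*-2}}{q_0-\lambda_*}-1\bigr)$, so it only needs $\alpha>(q_0-\lambda_*)^{(p-1)/2}$. The actual lower bound $(p^*(q_0-\lambda_*))^{(p-1)/2}$ comes from positivity at $t=0$, i.e.\ $k>0$. At that endpoint $\mathbb V_3=+\infty$ (the homoclinic orbit), which is harmless for uniqueness.
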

\begin{proof}
	For the case $\lambda_*-q_0= m^2\pi^2$, the uniqueness of $\hat q$ is evident. It follows that $\|\hat q-q_0\|_{\mathcal L^p}=0$ and $\hat q = q_0$.
	
	For the case 	$\lambda_*-q_0\ne m^2\pi^2$, the solution to \eqref{cri} exhibits periodicity. The maximum value of the $u_m(x)$ over a period is uniquely determined by \eqref{a_m1}, \eqref{a_m2} and \eqref{a_m}. 
	\begin{equation}\label{zuidazhi}
		u_m\left(\frac{T}{4}\right)=	u_m\left(\frac{m}{2}\right)=\pm\max_{x\in[0,T]}|u_m(x)|=\begin{cases}
			\pm \mathbb  V_1^{-1}\left(\frac{\sqrt{\lambda_*-q_0}}{2m}\right),\quad\text{for}\  q_0<\lambda_*<q_0+m^2\pi^2,\\
			\pm \mathbb V_2^{-1}\left(\frac{\sqrt{\lambda_*-q_0}}{2m}\right),\quad\text{for}\  q_0+m^2\pi^2<\lambda_*,\\
			\pm\left(2m\int_0^1\frac{1}{\sqrt{\frac{1}{p^*}(1-t^{2p^*})}}dt\right)^{p-1},\quad \text{for}\ \lambda_*=q_0,\\
			\pm \mathbb  V_3^{-1}\left( \frac{\sqrt{q_0-\lambda_*}}{2m}\right),\quad \text{for}\ \lambda_*<q_0.
		\end{cases}
	\end{equation} 
	Based on \eqref{zuidazhi},  $u_m'\left(\frac{T}{4}\right)=0$ and the existence and uniqueness theorem, we know that \eqref{cri} admits a unique pair of non-trivial solutions, $u_m$ and $-u_m$ (i.e., the solution is unique up to a sign change).\\
	Moreover, according to \eqref{hat q}, $\hat q$ is uniquely defined by $\lambda_*$ and $q_0$.
\end{proof}

\begin{remark}
	While the uniqueness of the inverse spectral problem  was studied in \cite{V-I} subject to $\lambda_* > \lambda_1(q_0)$, our work dispenses with this requirement. By utilizing a planar dynamical systems method, we prove the uniqueness for arbitrary parameters $(\lambda_*, q_0)$. In contrast, previous research on uniqueness has been heavily dependent on variational methods and convex analysis. To ensure this convexity, the restriction $\lambda_* > \lambda_1(q_0)$ becomes indispensable.
\end{remark}

\section{Dilation relation}

Note that the traditional inverse optimization spectral problem was described 
as follows: 
	\begin{equation}\label{old}
		\|\hat q-q_0\|_{\mathcal L^p}=\min\{\|q-q_0\|_{\mathcal L^p}:\lambda_*={\lambda_{\bf 1}(q)},\ q\in \mathcal L^p\}.
	\end{equation}
	Notably, \eqref{old} lacks flexibility for practical applications, as it rigidly restricts the observed eigenvalue to $\lambda_1(q)$. Our formulation \eqref{youhua}, however, imposes no such restriction, allowing the observed eigenvalue to be $\lambda_m(q)$ for any arbitrary $m$.
	$\la_1(q) = \lambda_*$ is in fact a specific case of $\la_m(q)=\lambda_*$ for any arbitrary $m$.
Interestingly, the method we employ in this work to determine the sign of the higher-order term in the critical equation departs from that of \cite{V-I}. Our alternative approach is not only applicable to the specific case of $\la_1(q) = \lambda_*$ treated in \cite{V-I} but also remains valid for the general case where $\la_m(q)=\lambda_*$ for any arbitrary $m$. In fact, whether the observed eigenvalue $\lambda_*$ is the principal eigenvalue or the $m$-th eigenvalue, the potential  can be reconstructed, and the error can be directly established via a homeomorphic mapping.  Let us define  the error $\mathcal Z_m$:
\begin{equation}
	\mathcal Z_m(x)=\begin{cases}
		\left(2mx^{p-\frac{1}{2}}\mathbb U_2\left(\mathbb V_2^{-1}\left(\frac{\sqrt{x}}{2m}\right)\right)\right)^{\frac{1}{p}},\quad &{ \rm for} \ x>m^2\pi^2,\\
		0,\quad &{ \rm for}\ x=m^2\pi^2,\\
		\left(2mx^{p-\frac{1}{2}}\mathbb U_1\left(\mathbb V_1^{-1}\left(\frac{\sqrt{x}}{2m}\right)\right)\right)^{\frac{1}{p}},\quad &{ \rm for}\ 0<x<m^2\pi^2,\\
			\left(\frac{4m^2}{{p^*}}\right)^p\left(\int_0^1\frac{1}{\sqrt{1-t^{2p^*}}}\right)^{2p-1}\int_0^1\frac{t^{2p^*}}{\sqrt{1-t^{2p^*}}}dt,  \quad &{ \rm for} \ x= 0,\\
		\left(2m(-x)^{p-\frac{1}{2}}\mathbb U_3\left(\mathbb V_3^{-1}\left(\frac{\sqrt{-x}}{2m}\right)\right)\right)^{\frac{1}{p}}, \quad &{ \rm for}\ x< 0,
	\end{cases}
\end{equation}
and 
\[
\mathcal R_m(x)=\frac{x}{m^2},\quad { \rm for}\ x\in\mathbb R.
\]
Consequently, we establish the following dilation relation theorem.
\begin{theorem}
{\bf(Dilation relation)}	For $m\in \mathbb N$,  $p\in(1,+\infty]$, there holds the following dilation relation between error  of higher order and the principal eigenvalues.
	\[
	\mathcal Z_1\circ \mathcal R_m(x)=\mathcal R_m\circ\mathcal Z_m(x),\quad { \rm for}\ x\in \mathbb R.
	\]
Moreover, for $\lambda_1=\lambda_*$, $\lambda_m=\lambda_*$, $\lambda_*\in \mathbb R$, the error corresponding to the inverse spectral problem for the $m$-th eigenvalue is expressed as follows
\[
\|\hat q-q_0\|_{\mathcal L^p}=\mathcal Z_m(\lambda_m-q_0)=\mathcal R_m^{-1}\circ\mathcal Z_1\circ \mathcal R_m(\lambda_1-q_0).
\]
\end{theorem}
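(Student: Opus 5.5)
The plan is to prove the dilation relation by a scaling argument applied to the critical equation of Theorem \ref{Th2.2}, rather than by manipulating the integrals $\mathbb U_i,\mathbb V_i$ directly. Write $\mu=\lambda_*-q_0$. By \eqref{guanxi}, $|\hat q-q_0|^p=|u_m|^{2p^*}$, so
\[
\mathcal Z_m(\mu)^p=\int_0^1|u_m(x)|^{2p^*}dx,
\]
where $u_m$ is the (up to sign unique, by Theorem \ref{3.12}) solution of \eqref{cri} with exactly $m-1$ interior zeros. The target identity $\mathcal Z_1(\mu/m^2)=\mathcal Z_m(\mu)/m^2$ therefore amounts to relating $u_m$ at parameter $\mu$ to $u_1$ at parameter $\mu/m^2$.

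The first step is the rescaling. By Lemma \ref{zhouqi}, the zeros of $u_m$ are exactly $j/m$, and $u_m$ is $1/m$-antiperiodic, so $u_m$ restricted to $[0,1/m]$ is a one-signed solution on an interval of length $1/m$. I set
\[
w(y)=\beta\, u_m(y/m),\qquad y\in[0,1],
\]
and substitute into $-u''-\mu u+\e\,\phi_{2p^*}(u)=0$. This gives
\[
-w''-\frac{\mu}{m^2}w+\frac{\e}{m^2}\beta^{2-2p^*}\phi_{2p^*}(w)=0.
\]
Choosing $\beta^{2p^*-2}=m^{-2}$, i.e. $\beta=m^{-(p-1)}$ (using $2p^*-2=2/(p-1)$), makes $w$ solve \eqref{cri} with $m=1$ and parameter $\mu/m^2$, with the same sign $\e$. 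Moreover $w(0)=w(1)=0$ and $w$ has no interior zeros.

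The second step is to check consistency of the sign and the regime. The regime thresholds match: $\mu>m^2\pi^2$ if and only if $\mu/m^2>\pi^2$, and $\mu<0$ if and only if $\mu/m^2<0$. So Theorems \ref{ep_sign} and \ref{ep_sign2} assign the same $\e$ to both problems, which confirms that $w$ corresponds to the correct branch of $\mathcal Z_1$. By the uniqueness in Theorem \ref{3.12}, $w=\pm u_1$, where $u_1$ is the critical function for the principal-eigenvalue problem at $\mu/m^2$. The degenerate cases $\mu=m^2\pi^2$ (both errors vanish) and $\mu=0$ are covered by the same computation.

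The third step is the error computation. Using antiperiodicity and then the change of variables $x=y/m$,
\[
\int_0^1|u_m|^{2p^*}dx=m\int_0^{1/m}|u_m|^{2p^*}dx=\int_0^1|u_m(y/m)|^{2p^*}dy=\beta^{-2p^*}\int_0^1|w|^{2p^*}dy .
\]
Since $\beta^{-2p^*}=m^{2p^*(p-1)}=m^{2p}$, this gives $\mathcal Z_m(\mu)^p=m^{2p}\mathcal Z_1(\mu/m^2)^p$, that is, $\mathcal R_m\circ\mathcal Z_m=\mathcal Z_1\circ\mathcal R_m$. The second displayed formula in the statement then follows by applying $\mathcal R_m^{-1}$ and using Theorem \ref{Th6} for $\|\hat q-q_0\|_{\mathcal L^p}=\mathcal Z_m(\lambda_*-q_0)$.

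As a cross-check, I would also verify the identity directly from \eqref{result}. The substitution $\alpha\mapsto m^{p-1}\alpha$ transforms $\mathbb V_i$ and $\mathbb U_i$ at parameter $\mu$ into those at $\mu/m^2$, because $\alpha^{2p^*-2}/\mu$ is invariant under this substitution.

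The main obstacle I expect is the endpoint $p=\infty$ allowed in the statement. There $p^*=1$ and the $\mathcal L^p$ formalism via $\phi_p$ breaks down. I would handle it by passing to the limit $p\to\infty$ in the identity $\mathcal Z_m(\mu)=m^2\mathcal Z_1(\mu/m^2)$, which holds for every finite $p$. Justifying this requires continuity of the optimal error in $p$, and I would obtain it from the complete continuity of eigenvalues in the potential \cite{zhang1}. The remaining delicate points are minor: the stated formula for the case $\lambda_*=q_0$ should also satisfy the scaling, which the substitution check confirms up to how that closed form is written, and the identification of $w$ with the principal branch must rely on uniqueness rather than on the sign alone.
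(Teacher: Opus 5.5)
The paper states this theorem without proof. The only route its setup suggests is to take $\mathcal Z_m$ from the quadrature formulas of Theorem \ref{Th6} and check the identity by a change of variables in $\mathbb U_i,\mathbb V_i$, which is exactly your cross-check. Under $\mu\mapsto\mu/m^2$ and $\alpha\mapsto m^{-(p-1)}\alpha$ the ratio $\alpha^{2p^*-2}/\mu$ is invariant, so $a_1=m^{-(p-1)}a_m$, $\mathbb U_i$ is unchanged, and the prefactors $2m\mu^{p-1/2}$ produce the factor $m^{2p}$.

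Your main argument is different and correct: you rescale the critical equation by $w(y)=m^{-(p-1)}u_m(y/m)$ and use the $1/m$-antiperiodicity from Lemma \ref{zhouqi}. The exponent bookkeeping ($\beta^{2p^*-2}=m^{-2}$, $\beta^{-2p^*}=m^{2p}$) checks out. This route is the better one: it handles all regimes at once and explains the relation rather than verifying it formula by formula. You can also make it independent of Theorem \ref{3.12}, and hence of the unproved monotonicity of the $\mathbb V_i$:
\begin{itemize}
\item Since $w$ has no interior zeros, $\e|w|^{2p^*-2}$ is admissible for the principal problem at $\mu/m^2$. This gives $\mathcal Z_1(\mu/m^2)\le m^{-2}\mathcal Z_m(\mu)$.
\item Extend the principal critical function antiperiodically and set $u(x)=m^{p-1}u_1(mx)$. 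This gives the reverse inequality.
\end{itemize}

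Two points should be stated explicitly rather than hedged.

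\textbf{The case $x=0$.} Your computation gives $\mathcal Z_m(0)=4m^2p^*\,I_1^{2-1/p}I_2^{1/p}$, where $I_1=\int_0^1(1-t^{2p^*})^{-1/2}dt$ and $I_2=\int_0^1t^{2p^*}(1-t^{2p^*})^{-1/2}dt$. This is proportional to $m^2$. The closed form printed in the definition of $\mathcal Z_m$ lacks the $1/p$-th root and has $p^*$ in the denominator, so it is proportional to $m^{2p}$. With the printed form the identity literally fails at $x=0$ for $m\ge 2$. What you prove is the corrected statement, and you should say so.

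\textbf{The case $p=\infty$.} Here the formula for $\mathcal Z_m$ is meaningless and the Lagrange-multiplier derivation of \eqref{cri} is unavailable, so $\mathcal Z_m$ must be read as the optimal value. Your limit argument can be made rigorous. The optimal value is nondecreasing in $p$, and a diagonal weak limit of minimizers together with complete continuity of $\lambda_m$ gives the matching lower bound. However, the limit is unnecessary. By monotonicity of $\lambda_m$ in $q$, the $\mathcal L^\infty$ optimal error is $|\mu-m^2\pi^2|$, attained by a constant shift of $q_0$. This visibly satisfies $\mathcal Z_m(\mu)=m^2\mathcal Z_1(\mu/m^2)$.
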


{\begin{remark}
	
This homeomorphism demonstrates that our dynamical-system framework applies to Problem (ISOP) for any \(\lambda_m(q)\) as the observed eigenvalue \(\lambda_*\), without being restricted to \(\lambda_1(q)\). Conventional inverse spectral work, however, is only valid under the constraint \(\lambda_*=\lambda_1(q)\).
\end{remark}
}


\section{Conclusions and practical applications}


 \subsection{Theoretical conclusions }
 
 This paper establishes a dynamical-system framework that yields quantitative results for the inverse optimal spectral problem. The primary contributions and innovations of this study are summarized as follows:
 
 {
 (i) {\em Quantitative results:} By introducing  planar dynamical system approach, we initially establish a quantitative results for the inverse spectral problem. We convert an infinite-dimensional optimization problem into a one-dimensional, exactly quantified problem, which quantitatively characterizes the relationship between the reconstructed potential $\hat{q}$, its target potential  $q_0$, and the observed eigenvalue $\lambda_*$. 
Remarkably, when $q \in \mathcal{L}^2$, our approach delivers an exact analytical expression for $\hat{q}$ (see Theorem \ref{Th6.1}), thereby completely resolving the optimization inverse spectral problem. Note that previous literature is confined to the qualitative analysis (existence and uniqueness), which is not applicable to the practical applications in structural health monitoring and damage detection, material design, seismic wave analysis, sonar detection, and related fields.

(ii) {\em Eliminating a long-standing constraint:}  { This work eliminates a long-standing constraint in the inverse optimization spectral problems: the requirement $\lambda_* > \lambda_1(q_0)$ found in previous studies.
	This constraint arises because previous results are heavily reliant on convex analysis theory. To ensure this convexity, the restriction $\lambda_* > \lambda_1(q_0)$ becomes indispensable.
	However, this imposed condition is often impractical in real-world scenarios. In this paper, we establish a comprehensive  framework valid for all $\lambda_*, q_0 \in \mathbb{R}$, extending the theory to a global parameter space. 	
	}

(iii) {\em The superiority of the method:}  In a decisive step beyond classical techniques, we introduce a dynamical system approach that fundamentally transforms the problem. Previous research on this problem has primarily relied on variational methods and convex analysis, yielding only qualitative results such as existence and local uniqueness. By contrast, our dynamical system approach not only provides an explicit quantitative interplay between the reconstructed potential $\hat{q}$, the baseline $q_0$, and the eigenvalue $\lambda_*$, but also eliminates the unnecessary constraint $\lambda_* > \lambda_1(q_0)$.

(iv) {\em Homeomorphic mapping:}  An extra contribution of this work is the construction of a homeomorphic mapping, which enables us to determine the dilation relation linking the errors $\|\hat{q} - q_0\|_{\mathcal{L}^p}$ associated with $\lambda_1(q)$ and $\lambda_m(q)$.
This pivotal result enables the extension of error findings from the $m$-th eigenvalue to the principal eigenvalue problem 
thereby establishing a unified quantitative approach to solving different classes of inverse spectral problems. {This homeomorphism shows our dynamical-system framework accommodates any \(\lambda_m(q)\) as \(\lambda_*\) in Problem (ISOP), rather than only \(\lambda_1(q)\). By contrast, classical inverse spectral analyses are confined to the case \(\lambda_*=\lambda_1(q)\).
}

}

\subsection{Practical applications}

The investigation of the inverse spectral problem (IOSP) presented in this paper is of considerable practical significance. With the theoretical framework established, we now proceed to address the four questions raised in the introduction:

1. In the context of structural health monitoring, the stiffness distribution of critical components such as bridge stay cables or architectural beams is modeled as $q_0$. When structural integrity is compromised by cracks or corrosion, the resulting shifts in the measured resonant frequency $\lambda_*$ serve as a diagnostic indicator. By utilizing the observed frequency $\lambda_*$, Our framework reconstructs the current physical state $\hat{q}$, enabling precise localization of the most probable sites of structural damage by computing the error $\|\hat{q} - q_0\|_{\mathcal{L}^p}$ as described in Theorem \ref{Th6} or by using the explicit expression of $\hat{q}$ provided in Theorem \ref{Th6.1}. If $\|\hat{q} - q_0\|_{\mathcal{L}^p}$ tends to $0$, the structure is in good condition. In particular, if $\hat{q} = q_0$, the structure is perfectly sound. {Convex analysis approaches impose a constraint that the observed frequency must exceed the fundamental frequency of the structure, namely \(\lambda_*>\lambda_1(q_0)\), which is difficult to satisfy in practical engineering applications (consequently, not reasonable).  By contrast, the results derived in this paper are valid for arbitrary \(\lambda_*\) and \(q_0\). Furthermore, traditional inverse spectral analysis is only applicable to the fundamental frequency \(\lambda_1\); a unified theoretical framework for damage detection using higher-order modes is absent. Multiple orders of vibration frequencies are generally obtainable from field measurements of bridges, yet such multi-frequency data cannot be fully exploited by existing approaches. This paper constructs a homeomorphic mapping to establish a scaling relationship between the $m$-th eigenvalue and the principal eigenvalue. It makes full use of multi-order frequency data collected from bridge vibration monitoring, laying a solid theoretical foundation for bridge structural health monitoring based on various vibration frequencies.}

2. {When dealing with a rare string that has suffered centuries of physical wear, resulting in a drifted timbre, how can a restorer apply minimal perturbations to the initial physical state $q_0$ to return the string to its original, ideal resonance $\lambda_*$?
Based on the results of this paper, if the material of the string is approximately homogeneous (i.e. $q_0$ is a constant), the restoration solution $\hat{q}$ is unique, and the structural change $\|\hat{q} - q_0\|_{\mathcal{L}^p}$ resulting from the restoration can be determined. Moreover, for $q \in \mathcal{L}^2$, we can derive the exact expression for the restoration solution $\hat{q}$.}

3. In seismic wave studies, $\lambda^*$ represents seismic eigenfrequencies, while $q_0$ denotes an initial estimate of the subsurface structure $q(x)$ derived from prior knowledge (geological information, well-log data, regional models, etc.). The formulation in Equation \eqref{youhua} incorporates geological priors $q_0$ as constraints, guiding the inversion process to select an optimal solution $\hat{q}$ that simultaneously satisfies both the observed seismic data $\lambda^*$ and aligns optimally with existing geological understanding $q_0$.

4. When engineers aim to design advanced composite materials with precise resonant frequencies $\lambda_*$ under specific vibration modes, this optimization framework proves indispensable. Starting from an initial density distribution $q_0$, the model performs fine-scale adjustments to the material properties. By solving this optimization problem, one can derive the optimal distribution of material density or elastic modulus $\hat{q}$ that not only satisfies the rigorous frequency requirements but also remains maximally faithful to the original design concept.

\section*{Acknowledgement}
\hskip\parindent
\small
We
declare that the authors are ranked in alphabetic order of their names and all of them have the same
contributions to this paper.

\end{document}